\theoremstyle{plain}
\newtheorem{theorem}{Theorem}[section]
\newtheorem{lemma}[theorem]{Lemma}
\newtheorem{proposition}[theorem]{Proposition}
\newtheorem{corollary}[theorem]{Corollary}
\theoremstyle{definition}
\newtheorem{example}[theorem]{Example}
\newtheorem{definition}[theorem]{Definition}
\newtheorem{claim}[theorem]{Claim}
\theoremstyle{remark}
\newtheorem{remark}[theorem]{Remark}
\begin{document}
	\title[On The Enhanced Power Graph of a Semigroup]{On The Enhanced Power Graph of a Semigroup}
	\author[Sandeep Dalal, Jitender Kumar, Siddharth Singh]{Sandeep Dalal, Jitender Kumar, Siddharth Singh}
	\address{Department of Mathematics, Birla Institute of Technology and Science Pilani, Pilani, India}
	\email{deepdalal10@gmail.com,jitenderarora09@gmail.com,sidharth$\_$0903@hotmail.com}

	\begin{abstract}
		The enhanced power graph $\mathcal P_e(S)$ of a semigroup $S$ is a simple graph whose vertex set is $S$ and two vertices $x,y \in S$ are adjacent if and only if $x, y \in \langle z \rangle$ for some $z \in S$, where $\langle z \rangle$ is the subsemigroup generated by $z$. In this paper, first we described the structure of $\mathcal P_e(S)$ for an arbitrary semigroup $S$. Consequently, we discussed the connectedness of  $\mathcal P_e(S)$. Further, we characterized the semigroup $S$ such that $\mathcal P_e(S)$ is complete, bipartite, regular, tree and null graph, respectively. Also, we have investigated the planarity together with the minimum degree and independence number of  $\mathcal P_e(S)$. The chromatic number of a spanning  subgraph, viz. the cyclic graph, of  $\mathcal P_e(S)$ is proved to be countable. At the final part of this paper, we construct an example of a semigroup $S$ such that the chromatic number of  $\mathcal P_e(S)$ need not be countable. 
	\end{abstract}

	\subjclass[2010]{20M10}
	
	\keywords{Monogenic semmigroup, completely regular semigroup, planar graph, enhanced power graph}
	
	\maketitle

\section{Introduction}
The investigation of graphs associated to semigroups is a large research area. In 1964, Bosak \cite{b.bosak1964graphs} studied certain graphs over semigroups. Most important class of graphs defined by semigroups is that Cayley graphs (cf. \cite{a.budden1985cayley,a.trotter1978cartesian,a.witte1984survey}), since they have numerous application (cf. \cite{a.kelarev2009mining,a.Kelerve-minimal-automata}). In 2000, Kelarev and Quinn defined two interesting classes of directed graphs, viz.  divisibility and power graph on semigroups \cite{a.kelarev2002directed,a.kelarev2001powermatrices}). The undirected power graph $\mathcal{P}(S)$ of a semigroup $S$  became the main focus of study in \cite{a.MKsen2009} defined by Chakraborty et al. is whose vertex set is $S$ and two distinct vertices $x, y$ are adjacent if either $x = y^m$ or $y = x^n$  for some $m, n \in \mathbb N$. We refer the reader for more results on power graph to survey paper \cite{a.abawajy2013power}.  The commuting graph $\Delta(S)$ of a semigroup $S$ is the graph whose vertex set is $\Omega \subseteq S$ and two distinct vertices $x, y$ are adjacent if $xy = yx$. Commuting graphs of various semigroups have been studied in term of their properties such as connectivity or diameter (cf.\cite{a.Araujo2015,a.Araujo2011,a.konieczny2002semigroups}). The power graph $\mathcal P(S)$ is a spanning subgraph of commuting graph $\Delta (S)$ when $\Omega = S$. In \cite{a.Aalipour2017}, Aalipour et al. characterized the finite group $G$ such that the $\mathcal P(G)$ coincides with the commuting graph of $G$. If these two graphs of $G$ do not coincide, then to measure how much the power graph is close to the commuting graph of a group $G$, they introduced a new graph called enhanced power graph, denoted by $\mathcal{P}_e(G)$, is the graph whose vertex set is the group $G$ and two distinct vertices $x, y$ are adjacent if $x, y \in \langle z \rangle$ for some $z \in G$. Aalipour et al. characterize the finite group $G$ in \cite{a.Aalipour2017}, for which equality holds for either two of the three graphs viz. power graph, enhanced power graph and commuting graph of $G$. Further, the enhanced power graphs have been studied by various researchers. In \cite{a.Bera2017}, Bera et al. characterized the abelian groups and the non abelian $p$-groups having dominatable enhanced power graphs. In \cite{a.Dupont2017}, Dupont et al. determined the rainbow connection number of enhanced power graph of a finite group $G$. Later, Dupont et al.   studied the graph theoretic properties in \cite{a.Dupont2017quotient} of enhanced quotient graph of a finite group $G$.  Ma et al. \cite{2019Mametric} investigated the metric dimension of an enhanced power graph of finite groups.  Zahirovi$\acute{c}$ et al. \cite{a.2019study}  proved that two finite abelian groups are isomorphic if their enhanced power graphs are isomorphic. Also, they supplied a characterization of finite nilpotent groups whose enhanced power graphs are perfect.  Recently, Panda et al. \cite{a.Panda-enhanced} studied the graph-theoretic properties viz. minimum degree, independence number, matching number, strong metric dimension and perfectness of enhanced power graph over finite abelian groups and some non abelian groups such as Dihedral groups, Dicyclic groups and the group $U_{6n}$.  Dalal et al. \cite{a.dalal2021enhanced} investigated the graph-theoretic properties of enhanced power graphs over semidihedral group $SD_{8n}$ and the group $V_{8n}$.  Bera et al. \cite{a.Bera2021EPG} gave an upper bound for the vertex connectivity of enhanced power graph of any finite abelian group $G$. Moreover, they classified the finite abelian group $G$ such that their proper enhanced power graph is connected.

In this paper, we have initiated the study of enhanced power graph on a semigroup $S$.  This paper is structured as follows. In Section 2, we provide necessary background material and fix our notations used throughout the paper. In Section $3$, first we describe the structure of $\mathcal{P}_e(S)$ and then study various graph theoretic properties viz. connectedness, completeness, bipartite, minimum degree, independence number etc. Additionally, we have studied the planarity of $\mathcal{P}_e(S)$. In Section $4$, we provide an example of a semigroup $S$ such that $\chi(\mathcal{P}_e(S))$ is uncountable.

\section{Preliminaries}
In this section, we recall necessary definitions, results and notations of graph theory from \cite{b.West} and semigroup theory from \cite{b.Howie}. 
A graph $\mathcal{G}$ is a pair  $ \mathcal{G} = (V, E)$, where $V = V(\mathcal{G})$ and $E = E(\mathcal{G})$ are the set of vertices and edges of $\mathcal{G}$, respectively. We say that two different vertices $a, b$ are $\mathit{adjacent}$, denoted by $a \sim b$, if there is an edge between $a$ and $b$. We are considering simple graphs, i.e. undirected graphs with no loops  or repeated edges. If $a$ and $b$ are not adjacent, then we write $a \nsim b$. The \emph{neighbourhood} $ N(x) $ of a vertex $x$ is the set all vertices adjacent to $x$ in $ \mathcal G $. Additionally, we denote ${\rm N}[x] = {\rm N}(x) \cup \{x\}$. A subgraph  of a graph $\mathcal{G}$ is a graph $\mathcal{G}'$ such that $V(\mathcal{G}') \subseteq V(\mathcal{G})$ and $E(\mathcal{G}') \subseteq E(\mathcal{G})$. A \emph{walk} $\lambda$ in $\mathcal{G}$ from the vertex $u$ to the vertex $w$ is a sequence of  vertices $u = v_1, v_2,\ldots, v_{m} = w$ $(m > 1)$ such that $v_i \sim v_{i + 1}$ for every $i \in \{1, 2, \ldots, m-1\}$. If no edge is repeated in $\lambda$, then it is called a \emph{trail} in $\mathcal{G}$. A trail whose initial and end vertices are identical is called a \emph{closed trail}. A walk is said to be a \emph{path} if no vertex is repeated.  The length of a path is the number of edges it contains and a path of length $n$ is denoted by $P_n$. If $U \subseteq V(\mathcal{G})$, then the  subgraph of $\mathcal{G}$ induced by  $U$ is the graph $\mathcal{G}'$ with vertex set $U$, and with two vertices adjacent in $\mathcal{G}'$ if and only if they are adjacent in $\mathcal{G}$. A graph  $\mathcal{G}$ is said to be \emph{connected} if there is a path between every pair of vertex. A graph $\mathcal{G}$ is said to be \emph{complete} if any two distinct vertices are adjacent.  A path that begins and ends on the same vertex is called a \emph{cycle}.

An \emph{independent set} of a graph $\mathcal{G}$ is a subset  of $V(\mathcal{G})$ such that no two vertices in the subset are adjacent in $\mathcal{G}$. 
A graph $\mathcal{G}$ is said to be \emph{bipartite} if $V(\mathcal{G})$ is the union of two disjoint independent sets.  A graph $\mathcal{G}$ is called a \emph{complete bipartite} if $\mathcal{G}$ is bipartite with $V(\mathcal{G}) = A \cup B$, where $A$ and $B$ are disjoint independent sets such that $x \sim y$ if and only if $x \in A$ and $y \in B$. We shall denote it by $K_{n, m}$, where $|A| = n$ and $|B| = m$. A graph $\mathcal{G}$ is said to be a \emph{star graph}\index{star graph} if $\mathcal{G} = K_{1, n}$ for some $n \in \mathbb N$. 

\begin{theorem}[{\cite[Theorem 1.2.18]{b.West}}]\label{ch1-bipartite}
A graph  $\mathcal{G}$ is bipartite if and only if $\mathcal{G}$ does not contain an odd cycle.
\end{theorem}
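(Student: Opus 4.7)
The plan is to prove both directions of the equivalence separately, with the forward direction being essentially immediate and the real content lying in the converse.

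For the forward direction, suppose $\mathcal{G}$ is bipartite with vertex partition $V(\mathcal{G}) = A \cup B$, where $A$ and $B$ are independent sets. Let $C \colon v_1 \sim v_2 \sim \cdots \sim v_k \sim v_1$ be any cycle in $\mathcal{G}$. Since consecutive vertices on $C$ are adjacent, they must lie in different parts of the bipartition, so the vertices alternate between $A$ and $B$ as we traverse $C$. Returning to the starting vertex $v_1$ forces the number of edges (equivalently, the number of steps) to be even. Hence every cycle has even length, and $\mathcal{G}$ contains no odd cycle.

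For the converse, assume $\mathcal{G}$ has no odd cycle; I would like to exhibit a bipartition. First I would reduce to the case where $\mathcal{G}$ is connected: if $\mathcal{G}$ has several connected components, a bipartition of each component can be combined into a bipartition of the whole graph, and the no-odd-cycle hypothesis passes to each component. So assume $\mathcal{G}$ is connected and fix a base vertex $v_0 \in V(\mathcal{G})$. For each vertex $u$, let $d(v_0, u)$ denote the length of a shortest path from $v_0$ to $u$, and set
\[
A = \{u \in V(\mathcal{G}) : d(v_0, u) \text{ is even}\}, \qquad B = \{u \in V(\mathcal{G}) : d(v_0, u) \text{ is odd}\}.
\]
Clearly $A$ and $B$ are disjoint and their union is $V(\mathcal{G})$ since $\mathcal{G}$ is connected.

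The key step is showing that $A$ and $B$ are each independent sets. Suppose, aiming at a contradiction, that there is an edge $xy$ with both $x,y \in A$ (the case $x,y \in B$ is symmetric). Let $P_x$ be a shortest $v_0$--$x$ path and $P_y$ a shortest $v_0$--$y$ path; both have even length by assumption. Consider the closed walk obtained by traversing $P_x$, then the edge $xy$, then $P_y$ in reverse. Its total length is $d(v_0,x) + 1 + d(v_0,y)$, which is odd. The main obstacle is that this closed walk need not itself be a cycle, since $P_x$ and $P_y$ may share vertices. To extract an actual odd cycle I would consider the last common vertex $w$ of $P_x$ and $P_y$ (starting from $v_0$); by the shortest-path property, the two sub-paths from $w$ to $x$ and from $w$ to $y$ have the same length (namely $d(v_0,x) - d(v_0,w) = d(v_0,y) - d(v_0,w)$, using that an initial segment of a shortest path is a shortest path), and together with the edge $xy$ they form a cycle of length $2(d(v_0,x) - d(v_0,w)) + 1$, which is odd. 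This contradicts the hypothesis, so $A$ and $B$ are independent and $\mathcal{G}$ is bipartite.
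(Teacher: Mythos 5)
Your proof is correct, but note that the paper does not prove this statement at all: it is imported as background, quoted verbatim from West's \emph{Introduction to Graph Theory} (Theorem 1.2.18) and used as a black box in the proof of Theorem \ref{bipartite}. Your argument --- cycles alternate between the two parts in one direction, and in the other direction the bipartition by parity of distance from a base vertex, with an odd cycle extracted from the odd closed walk via the last common vertex of the two shortest paths --- is precisely the standard textbook proof, so there is nothing to reconcile with the paper. The only points worth a passing word if you were to write this out in full are that adjacent vertices have distances from $v_0$ differing by at most one (which is why your two even distances $d(v_0,x)$ and $d(v_0,y)$ are in fact equal, though the parity count gives an odd cycle even without this), and that the two sub-paths beyond the last common vertex $w$ are internally disjoint, which is exactly what the maximality of $d(v_0,w)$ guarantees.
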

%
%
%
%
%
%

A \emph{planar graph}\index{planar} is a graph that can be embedded in the plane, i.e. it can be drawn on the plane in such a way that its edges intersect only at their endpoints. The following theorem will be useful in the sequel.

\begin{theorem} [{\cite[Theorem 6.2.2]{b.West}}] \label{ch1-planar-Kuratowski}
A graph $\mathcal{G}$ is planar if and only if it does not contain a subdivision of $K_5$ or $K_{3,3}$.
\end{theorem}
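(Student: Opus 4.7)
This is Kuratowski's classical theorem; I would treat the two directions separately, with the non-trivial content lying entirely in the forward implication. The easy direction states that a graph containing a subdivision of $K_5$ or $K_{3,3}$ is non-planar. Since subdividing edges preserves planarity, it suffices to prove that $K_5$ and $K_{3,3}$ themselves are non-planar. Both follow from Euler's formula $n - m + f = 2$: combined with $2m \ge 3f$ one obtains $m \le 3n - 6$, which is violated by $K_5$ since $10 > 9$; for $K_{3,3}$, bipartiteness forces $2m \ge 4f$, giving $m \le 2n - 4$, which is violated since $9 > 8$.

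For the hard direction, I would argue by contradiction on a minimum counterexample $G$ (fewest edges). The first reduction is to the $3$-connected case: if $G$ has a cutset of size at most $2$, planarity and the existence of a Kuratowski subdivision can both be analysed on blocks or on the two sides of the $2$-cut, and a counterexample descends to a smaller one. The second step invokes Tutte's contraction lemma, which guarantees that in a $3$-connected graph on at least five vertices there is an edge $e = xy$ such that $G/e$ remains $3$-connected. By minimality, either $G/e$ is planar or it contains a Kuratowski subdivision; in the second case, one lifts the subdivision back to $G$ by splitting the contracted vertex into $x$ and $y$, which produces a $K_5$ or $K_{3,3}$ subdivision in $G$ and yields a contradiction.

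The main obstacle is the remaining subcase, where $G/e$ does admit a planar embedding $\Pi$. Let $z$ be the image of the contracted edge, and note that the facial structure of $\Pi$ around $z$ determines a cyclic order on $N_{G/e}(z) = (N_G(x) \cup N_G(y)) \setminus \{x,y\}$. A planar embedding of $G$ is recovered by splitting $z$ into $x$ and $y$ precisely when the neighbours of $x$ and the neighbours of $y$ can be separated into two arcs on this cyclic order. The heart of the proof is to show that when no such separation exists, the interleaving of $N_G(x)\setminus\{y\}$ and $N_G(y)\setminus\{x\}$, combined with the $3$-connectivity of $G$, forces internally disjoint paths between suitable triples or quadruples of vertices; this yields a $K_{3,3}$ subdivision in the generic interleaved case and a $K_5$ subdivision when enough neighbours coincide. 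This case analysis, powered by Menger's theorem to extract the needed disjoint paths, is the technical core of Kuratowski's theorem and the step I would expect to occupy most of the write-up.
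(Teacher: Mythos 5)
This statement is Kuratowski's theorem, which the paper quotes from West \cite{b.West} as a known classical result and does not prove, so there is no proof in the paper to compare against. Your outline is the standard Thomassen-style argument (Euler's formula for the easy direction; reduction to the $3$-connected case, the contractible-edge lemma, and the splitting/interleaving analysis of the contracted vertex for the hard direction), which is essentially the proof given in West's textbook; it is correct as a sketch, with the understood caveat that the $2$-cut reduction and the interleaving case analysis would need to be carried out in full detail in a complete write-up.
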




Now we anamnesis some basic definitions and results on semigroups. A \emph{semigroup} is a non-empty set $S$ together with an associative binary operation on $S$. We say $S$ to be a \emph{monoid}\index{Monoid} if it contains an identity element $e$. If $S$ has no identity element, then it is easy to adjoin an extra element $e$ to $S$ to form a monoid. We define $es = se = s ~ \forall s \in S$ and $ee = e$, it is routine to verify that $S \cup \{e\}$ forms a monoid and it is denoted by $S^1$.  A monoid $S$ is said to be a \emph{group} if for each $x$ there exists $x^{-1} \in S$ such that $xx^{-1} = x^{-1}x = e$. A \emph{subsemigroup} of a semigroup is a subset that is also a semigroup under the same operation. A subsemigroup of $S$ which is a group with respect to the multiplication inherited from $S$ will be called  \emph{subgroup}. An element $a$ of a semigroup $S$ is \emph{idempotent}\index{idempotent element}   if $a^2 = a$ and the set of all idempotents in $S$ is denoted by $E(S)$.  A \emph{band}\index{band} is a semigroup  in which every element is idempotent. For a subset $X$ of a semigroup $S$, the subsemigroup generated by $X$, denoted by $\langle X \rangle$, is the intersection of all the subsemigroup of $S$ containing $X$ and it is the smallest subsemigroup of $S$ containing $X$. The subsemigroup $\langle X \rangle$ is the set of all the elements in $S$ that can be written as finite product of elements of $X$. If $X$ is finite then $\langle X \rangle$ is called finitely generated subsemigroup of $S$. A semigroup $S$ is called \emph{monogenic}\index{monogenic semigroup} if there exists $a \in S$ such that $S = \langle a \rangle$. Clearly, $\langle a \rangle = \{a^m \; : \; m \in \mathbb{N}\}$, where $\mathbb{N}$ is the set of positive integers. A subgroup generated by $X$ can be defined analogously. If $X = \{a\}$, then the subgroup generated by $X$ is called \emph{cyclic}\index{cyclic subgroup}. Note that the cyclic subgroup generated by $a$ is $\langle a \rangle = \{a^m : \; m \in \mathbb Z \}$.

For  $X \subseteq S$, the number of elements in $X$ is called the order (or size) of $X$ and it is denoted by $|X|$. The $\mathit{order}$ of an element $a\in S$, denoted by $o(a)$, is defined as  $|\langle a \rangle|$. The set $\pi(S)$ consists order of all the elements of a semigroup $S$. In case of finite monogenic semigroup, there are repetitions among the powers of $a$. Then the set
\[\{x \in \mathbb{N} : (\exists \; y \in \mathbb{N}) a^x = a^y, x \ne y\}\]
is non-empty and so has a least element. Let us denote this least element by $m$ and call it the \emph{index}\index{index} of the element $a$. Then the set
\[\{x \in \mathbb{N} \; : \; a^{m + x} = a^m \}\]
is non-empty and so it too has a least element $r$, which we call the \emph{period}\index{period} of $a$. Let $a$ be an element  with index $m$ and period $r$. Thus, $a^m = a^{m + r}$. It follows that $a^m = a^{m + qr}$ for all $q \in \mathbb{N}$. By the minimality of $m$ and $r$ we may deduce that the powers
$a, a^2, \ldots, a^m, a^{m + 1}, \ldots, a^{m + r-1}$
are all distinct. For every $s \ge m$, by division algorithm we can write $s = m + qr + u$, where $q \ge 0$ and $0 \le u \le r-1$. then it follows that

\[a^s = a^{m + qr}a^u = a^m a^u = a^{m+u}.\]
Thus, $\langle a \rangle = \{a, a^2, \ldots, a^{m + r-1}\}$ and $o(a) = m + r - 1$. The subset \[\mathcal{K}_a = \{a^m, a^{m+1}, \ldots, a^{m+r-1} \}\]  is a subsemigroup of $\langle a \rangle$.  Moreover, there exists $g \in  \mathbb N$ such that $0 \leq g \leq r-1$ and $m + g \equiv 0({\rm mod} \; r)$. Note that $a^{m + g}$ is the idempotent element and so it is the identity element of $\mathcal{K}_a$. Because $a^{(m + g)^2} = a^{2m + 2g} = a^{m + (m +g) + g} = a^{m + tr + g}$ as $m + g \equiv 0({\rm mod} \; r)$ which gives $a^{(m + g)^2} = a^{m +g}$. If we choose $ g' \in  \mathbb N$ such that 
\[0 \leq  g' \leq r-1 \; {\rm and} \; m + g' \equiv 1({\rm mod} \; r),\]

\noindent then $k(m + g') \equiv k ({\rm mod} \; r)$ for all $k \in \mathbb N$, and so the powers $(a^{m + g'})^k$ of $a^{m +g'}$ for $k = 1, 2, \ldots, r$, deplete $\mathcal{K}_a$. Thus, $\mathcal{K}_a$ is the cyclic subgroup of order $r$, generated by $a^{m +g'}$.
Let $a$ be an element of a semigroup $S$ with index $m$ and period $r$. Then the monogenic semigroup $\langle a \rangle$ is denoted by $M(m, r)$. Also, sometimes $M(m, r)$ shall be written as
$\langle a : a^m = a^{m + r}\rangle$. The notations $m_a$ and $r_a$ denotes the index and period of $a$ in $S$, respectively. It is easy to observe that index of every element in a finite group $G$ is one. Consequently, for $a \in G$, we have $\langle a \rangle$ is the cyclic subgroup of $G$. 
\begin{remark}\label{ch1-Ka^i}
	Let $S = M(m, r) = \langle a \rangle$ be a monogenic semigroup. Then $\mathcal{K}_{a^i} = \langle a^i \rangle \cap \mathcal{K}_a$.
\end{remark}

A \emph{maximal monogenic subsemigroup} of $S$ is a monogenic subsemigroup of $S$ that is not properly contained in any other monogenic subsemigroup of $S$. We shall denote $\mathcal{M}$  by the set of all elements of $S$ that generates maximal monogenic subsemigroup of $S$ i.e.
\[\mathcal{M} = \{a \in S : \; \langle a \rangle \; {\rm is \; a \; maximal \; monogenic \; subsemigroup \; of} \; S \}.\] 

\emph{Green's relations}\index{Green's relations} were introduced by J.A  Green in $1951$ that characterize the elements of $S$  in terms of principal ideals. Further, they become a standard tool for investigating the structure of semigroup. These relations are defined as follows.
\begin{enumerate}
	\item $ x\; \mathcal L \; y$ if and only if $S^1 x = S^1 y$.
	\item $x \; \mathcal R  \; y$ if and only if $x S^1  = y S^1 $.
	\item $x \; \mathcal J \;  y$ if and only if $S^1 x S^1 = S^1 y S^1$.
	\item $x \; \mathcal H \; y$ if and only if $x \; \mathcal L  \; y$ and $x \; \mathcal R \; y$.
	\item $x \; \mathcal D \; y$ if and only if $x \; \mathcal L \; z$ and $z \; \mathcal R \; y$ for some $z \in S$.
\end{enumerate}
\vspace{.2cm}

\begin{remark}[{{\cite[p. $46$]{b.Howie}}}]\label{ch1-re-Green's-class}
	Let $G$ be a group. Then $\mathcal L = \mathcal R = \mathcal H = \mathcal D = \mathcal J = G \times G.$
\end{remark}

\begin{corollary}[{{\cite[Corollary 2.2.6]{b.Howie}}}]\label{ch1-H-class}
	Let $S$ be a semigroup and $f$ be an idempotent element of $S$. Then the $\mathcal{H}$-class $H_f$ containing $f$ is a  subgroup of $S$. 
\end{corollary}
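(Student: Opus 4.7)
The plan is to verify the group axioms for $H_f$ directly, using only the definitions of the Green's relations, idempotence of $f$, and the routine fact (immediate from the defining equalities $S^1 x = S^1 y$ and $x S^1 = y S^1$) that $\mathcal{L}$ is a right congruence on $S$ and $\mathcal{R}$ is a left congruence on $S$. Associativity is inherited from $S$, so the three tasks are to exhibit an identity, establish closure under multiplication, and produce two-sided inverses for each element.

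First I would show that $f$ is a two-sided identity on $H_f$. Fix $a \in H_f$. Since $a\,\mathcal{H}\,f$, in particular $a \in f S^1 \cap S^1 f$, so $a = f u = v f$ for some $u, v \in S^1$. Using $f^{2} = f$ we get $f a = f(fu) = fu = a$ and $af = (vf)f = vf = a$. Closure now follows cleanly from the congruence properties. Given $a, b \in H_f$, the right congruence $\mathcal{L}$ applied to $a \,\mathcal{L}\, f$ yields $ab \,\mathcal{L}\, fb = b$, and composing with $b \,\mathcal{L}\, f$ gives $ab \,\mathcal{L}\, f$. Symmetrically, the left congruence $\mathcal{R}$ applied to $b \,\mathcal{R}\, f$ yields $ab \,\mathcal{R}\, af = a$, whence $ab \,\mathcal{R}\, f$. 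Hence $ab \,\mathcal{H}\, f$, i.e.\ $ab \in H_f$.

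The step I expect to be the most delicate is producing inverses, because it is not obvious that a single element can serve simultaneously as a left and right inverse. From $a \,\mathcal{R}\, f$ I pick $u \in S^{1}$ with $a u = f$, and from $a \,\mathcal{L}\, f$ I pick $v \in S^{1}$ with $v a = f$. The candidate inverse is $a' := v f$, and the key small observation is the identity $v f = v(au) = (va)u = f u$, so $a'$ can equally well be written as $f u$. With these two expressions in hand, $a a' = a(fu) = (af)u = au = f$ and $a' a = (vf)a = v(fa) = va = f$.

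It remains to check $a' \in H_f$. From $a' = v f \in S^{1} f$ we have $S^{1} a' \subseteq S^{1} f$, while $f = a a' \in S^{1} a'$ gives $S^{1} f \subseteq S^{1} a'$; hence $S^{1} a' = S^{1} f$, i.e.\ $a' \,\mathcal{L}\, f$. Dually, $a' = f u \in f S^{1}$ and $f = a' a \in a' S^{1}$ combine to give $a' S^{1} = f S^{1}$, i.e.\ $a' \,\mathcal{R}\, f$. Thus $a' \in H_f$, completing the subgroup structure. The conceptual crux of the argument is the coincidence $vf = fu$, which is precisely the computation that lets one and the same element play both roles in the inverse relation.
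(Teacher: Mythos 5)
Your proof is correct. Note that the paper offers no proof of this statement at all --- it is quoted verbatim as \cite[Corollary 2.2.6]{b.Howie}, where it is obtained as a consequence of Green's Theorem (an $\mathcal{H}$-class $H$ is a group if and only if $ab\in H$ for some $a,b\in H$), which in turn rests on Green's Lemma about the translations $x\mapsto xs$ and $x\mapsto sx$ being mutually inverse bijections between $\mathcal{L}$- and $\mathcal{R}$-classes. Your argument bypasses that machinery entirely and verifies the group axioms by hand: the identity computation $fa=a=af$ from $a\in fS^1\cap S^1f$ and $f^2=f$, closure via the right-congruence property of $\mathcal{L}$ and left-congruence property of $\mathcal{R}$, and the two-sided inverse via the pivot identity $vf=v(au)=(va)u=fu$, followed by the check that $a'\,\mathcal{H}\,f$. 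Each step is sound (in particular $f\in aS^1=fS^1$ and $f\in S^1a=S^1f$ do hold, so $u$ and $v$ exist, and $a'=vf\in S$ even if $v$ is the adjoined identity). What your route buys is a short, self-contained proof from the bare definitions of the Green's relations; what the textbook route buys is the stronger structural statement (Green's Theorem) of which this corollary is a special case, reusable elsewhere in the theory. Either is acceptable here.
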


A semigroup is said to be \emph{completely regular}\index{completely regular semigroup} if every element $a$ of $S$ lies in a subgroup of $S$. Further, we have the following characterization of completely regular semigroup.

\begin{proposition}[{{\cite[Proposition 4.1.1]{b.Howie}}}]\label{ch1-completely-regular}
A semigroup $S$ is completely regular if and only if every $\mathcal{H}$-class in $S$ is a group.
\end{proposition}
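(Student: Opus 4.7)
The plan is to prove the two directions separately, with the reverse direction being essentially immediate and the forward direction relying on the earlier corollary that idempotent $\mathcal{H}$-classes are subgroups.

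For the easy direction ($\Leftarrow$), suppose every $\mathcal{H}$-class of $S$ is a group. Given any $a \in S$, the $\mathcal{H}$-class $H_a$ is then a subgroup of $S$ containing $a$, so $a$ lies in a subgroup of $S$. Hence $S$ is completely regular. This requires no real work beyond unpacking the definitions.

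For the forward direction ($\Rightarrow$), suppose $S$ is completely regular and fix $a \in S$. By hypothesis $a$ lies in some subgroup $G$ of $S$; let $e \in G$ be the identity of $G$ and $a^{-1} \in G$ the inverse of $a$ in $G$. Then $e$ is idempotent in $S$, so by Corollary \ref{ch1-H-class} the $\mathcal{H}$-class $H_e$ is a subgroup of $S$. The plan is to show $a \in H_e$, which forces $H_a = H_e$ and gives the result. To see this, I would verify $a \mathcal{L} e$ and $a \mathcal{R} e$ directly from the principal-ideal description of Green's relations: from $a = ea$ and $e = a^{-1}a$ one obtains $S^1 a \subseteq S^1 e$ and $S^1 e \subseteq S^1 a$, hence $S^1 a = S^1 e$, so $a \mathcal{L} e$; symmetrically, from $a = ae$ and $e = aa^{-1}$ one gets $aS^1 = eS^1$, hence $a \mathcal{R} e$. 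Combining these yields $a \mathcal{H} e$, so $H_a = H_e$ is a subgroup of $S$.

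Since $a$ was arbitrary, every $\mathcal{H}$-class of $S$ is a group, completing the proof. The only mildly non-trivial step is the passage from ``$a$ lies in some subgroup'' to ``$a$ is $\mathcal{H}$-related to the identity of that subgroup,'' but this reduces to a direct computation with the one-sided products $ea$, $ae$, $aa^{-1}$, $a^{-1}a$, all of which are available inside the ambient subgroup $G$. I do not anticipate any real obstacle; the result is essentially a bookkeeping consequence of Corollary \ref{ch1-H-class} together with the defining property of completely regular semigroups.
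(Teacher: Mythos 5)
Your argument is correct, and it is essentially the standard proof of this fact; note that the paper itself offers no proof here but simply cites \cite[Proposition 4.1.1]{b.Howie}, so there is nothing to diverge from. One small bookkeeping slip: with the paper's conventions ($x\,\mathcal{L}\,y$ iff $S^1x=S^1y$), the inclusion $S^1a\subseteq S^1e$ needs $a=ae$ (not $a=ea$), and dually $aS^1\subseteq eS^1$ needs $a=ea$ — you have the two identities attached to the wrong relations, but since $e$ is a two-sided identity of $G$ both identities hold and the conclusion $a\,\mathcal{H}\,e$ is unaffected.
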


A semigroup $S$ is said to be of \emph{bounded exponent}\index{bounded exponent} if there exists a positive integer $n$ such that for all $x \in S$, $x^n = f$ for some $f \in E(S)$. If $S$ is of bounded exponent then the \emph{exponent}\index{exponent} of $S$ is the least $n$ such that for each $x \in S$,  $x^n = f$ for some $f \in E(S)$. Note that every finite semigroup is of bounded exponent. We often use the following fundamental properties of semigroups without referring to it  explicitly. Let $S$ be a semigroup of bounded exponent. For $f \in E(S)$, we define
\begin{align}\label{ch1-eq-1}
	S_f = \{ a \in S \; : \; a^m= f \; \text{for some} \; m \in \mathbb N\}.
\end{align}

The following remark is useful in the sequel.

\begin{remark}\label{ch1-re-classification-compoenent} Let $S$ be a semigroup of bounded exponent. Then $S = \underset{f \in E(S)}{\bigcup S_f}$ and for distinct $f, \;  f' \in E(S)$, we have $S_f \bigcap S_{f'} = \emptyset$.
\end{remark}

\section{Graph invariants of $\mathcal P_e(S)$}
		
In this section, we first describes the structure of the enhanced power graph of a semigroup $S$. Further,  we characterized the semigroup $S$ such that $\mathcal{P}_e(S)$ is complete, connected, bipartite, tree and regular, respectively. Finally, we characterize the semigroup $S$ such that $\mathcal P_e(S)$ is planar.  For $x \in S$ and $m,n \in \mathbb N$, we define 
\[S(x,m,n) = \{y \in S : \; x^m = y^n \}\]
and we write  $C(x) = \bigcup\limits_{m,n \in  \mathbb N}S(x,m,n)$. The
following proposition describes the structure of $\mathcal P_e(S)$.
\begin{proposition}\label{ch2-component-infinite}
	The set $C(x)$ is a connected component of $\mathcal P_e(S)$. Moreover, the components of the graph $\mathcal P_e(S)$ are precisely $\{C(x) \; | \; x \in S \}$.
\end{proposition}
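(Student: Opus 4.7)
The plan is to show that the relation ``$y \sim_C x$ iff $y \in C(x)$'' is an equivalence relation whose classes coincide with the connected components of $\mathcal{P}_e(S)$. This will be done by two containments.

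First I would verify that membership in $C(x)$ defines an equivalence relation on $S$. Reflexivity is immediate from $x^1 = x^1$, and symmetry follows from the symmetry of the defining equation $x^m = y^n$. For transitivity, suppose $y \in C(x)$ and $z \in C(y)$, so $x^{m} = y^{n}$ and $y^{p} = z^{q}$ for appropriate positive integers. Raising the first equation to the $p$-th power and the second to the $n$-th power gives
\[
x^{mp} \;=\; y^{np} \;=\; (y^{p})^{n} \;=\; (z^{q})^{n} \;=\; z^{qn},
\]
so $z \in C(x)$. Thus the sets $C(x)$ partition $S$.

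Next I would show that every $y \in C(x)$ lies in the same connected component of $\mathcal{P}_e(S)$ as $x$. Pick $m,n$ with $x^m = y^n$ and set $z := x^m = y^n$. Then $x, z \in \langle x\rangle$ and $y, z \in \langle y\rangle$, so if all three of $x, y, z$ are distinct we obtain a path $x \sim z \sim y$; the degenerate cases ($z = x$, or $z = y$, or $x = y$) are even simpler, since they force $x$ and $y$ themselves to lie in a common monogenic subsemigroup (or to be equal). Hence $C(x)$ is contained in the connected component through $x$.

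For the reverse containment, it suffices (by the transitivity already established) to show that adjacent vertices lie in a common class. If $x$ and $w$ are adjacent, then $x, w \in \langle z\rangle$ for some $z \in S$, which means $x = z^{a}$ and $w = z^{b}$ for some $a,b \in \mathbb N$. Then $x^{b} = z^{ab} = w^{a}$, so $w \in C(x)$. An easy induction on the length of a path in $\mathcal{P}_e(S)$, combined with transitivity of $\sim_C$, shows that the connected component of $x$ is contained in $C(x)$. Combining both containments yields the proposition.

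The only real obstacle is bookkeeping around the degenerate cases in the middle step (when the connecting element $z = x^m = y^n$ happens to coincide with $x$ or $y$), but these cases actually strengthen the conclusion rather than break it, since they place $x$ and $y$ inside the same monogenic subsemigroup directly. All other steps are straightforward manipulations of the defining equations.
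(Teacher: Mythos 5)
Your proof is correct and follows essentially the same route as the paper: both arguments connect any two elements of $C(x)$ through a common power of $x$ serving as a hub vertex, and both establish that $C(x)$ is closed under adjacency by writing adjacent elements as powers of a common generator and deriving the relation $x^{m\beta}=z^{n\alpha}$. Your version is somewhat more careful (explicitly verifying that membership in $C(x)$ is an equivalence relation and handling the degenerate cases where the hub coincides with $x$ or $y$), but the underlying ideas are identical.
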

\begin{proof}
	Let $y, z \in C(x)$. Then $y \in S(x,m,n)$ and $z \in S(x,p,q)$.  It follows that $x^m = y^n$ and $x^p = z^q$. Note that $y \sim x^{mp} \sim z$. Thus, $C(x)$ is connected in $\mathcal P_e(S)$. Now suppose that the element $z$ of $S$ is adjacent to a vertex $y$ in $C(x)$. Since $y \sim z$ implies $ y, z \in  \langle t \rangle$ for some $t \in S$. For $y \in  C(x)$, we have $x^m = y^n$ for some $m, n \in \mathbb N$.  It follows that $y= t^{\alpha}, \; z = t^{\beta}$ and  $z^{n\alpha} = x^{m\beta}$. Thus, $z \in  C(x)$. Hence,  $C(x)$ is a connected component of $\mathcal P_e(S)$.
\end{proof}

\begin{corollary}\label{connected} 
Let $S$ be a semigroup. Then $\mathcal{P}_e(S)$ is connected if and only if $\langle x \rangle \cap \langle y \rangle \neq \varnothing$ for all $x, y \in S$. In this case, {\rm diam}$(\mathcal{P}_e(S)) \leq 2$. 
\end{corollary}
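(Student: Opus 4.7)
The plan is to deduce this corollary directly from Proposition~\ref{ch2-component-infinite}, which already describes the connected components of $\mathcal{P}_e(S)$ as the sets $C(x) = \bigcup_{m,n \in \mathbb N} S(x,m,n)$. The statement really has two directions plus a diameter bound, all of which are short once the component description is in hand.

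For the forward direction, I would argue as follows. Suppose $\mathcal P_e(S)$ is connected. Fix $x \in S$. By Proposition~\ref{ch2-component-infinite}, $C(x)$ is a connected component of $\mathcal P_e(S)$, so connectedness forces $C(x) = S$. For any $y \in S$, this means $y \in S(x,m,n)$ for some $m, n \in \mathbb N$, i.e.\ $x^m = y^n$. The common element $x^m = y^n$ then lies in $\langle x \rangle \cap \langle y \rangle$, so this intersection is non-empty.

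For the converse (and simultaneously the diameter bound), suppose $\langle x \rangle \cap \langle y \rangle \neq \varnothing$ for all $x,y \in S$, and let $x, y$ be two distinct vertices. Pick $z \in \langle x \rangle \cap \langle y \rangle$. I would split into cases according to whether $z$ equals $x$ or $y$. If $z = x$, then $x \in \langle y \rangle$, so $x$ and $y$ both lie in $\langle y \rangle$, hence $x \sim y$. The case $z = y$ is symmetric. Otherwise $z \notin \{x,y\}$, and then $x \sim z$ (both in $\langle x \rangle$) and $z \sim y$ (both in $\langle y \rangle$), giving a path $x \sim z \sim y$ of length at most $2$. In every case $d_{\mathcal P_e(S)}(x,y) \leq 2$, so $\mathcal P_e(S)$ is connected with ${\rm diam}(\mathcal P_e(S)) \leq 2$.

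There is no substantial obstacle here; the only minor subtlety is the distinctness convention for adjacency, which is why the two degenerate cases $z=x$ and $z=y$ must be handled separately before invoking the generic length-$2$ path. Everything else is a direct unwinding of the definitions together with Proposition~\ref{ch2-component-infinite}.
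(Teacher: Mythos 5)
Your proof is correct and follows exactly the route the paper intends: the paper states this as an unproved corollary of Proposition~\ref{ch2-component-infinite}, and your argument (connectedness forces $C(x)=S$, giving $x^m=y^n\in\langle x\rangle\cap\langle y\rangle$; conversely a common element yields a path of length at most $2$, with the degenerate cases $z=x$ or $z=y$ handled separately) is the natural filling-in of that deduction. No gaps.
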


The following lemma is useful in the sequel.

\begin{lemma}\label{gena-one idempotent}
Let $a$ be an element of a finite semigroup $S$. Then the subsemigroup $\langle a \rangle$ contains exactly one idempotent.
\end{lemma}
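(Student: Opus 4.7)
The plan is to leverage the structural description of a finite monogenic semigroup already laid out in the preliminaries. Since $S$ is finite, the subsemigroup $\langle a \rangle$ is finite, hence monogenic of the form $M(m,r) = \{a, a^2, \ldots, a^{m+r-1}\}$ for some index $m$ and period $r$, and it decomposes as the disjoint union of the ``tail'' $\{a, a^2, \ldots, a^{m-1}\}$ and the ``kernel'' $\mathcal{K}_a = \{a^m, a^{m+1}, \ldots, a^{m+r-1}\}$.

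For existence, I would simply cite the computation in the preliminaries: choosing $g \in \mathbb{N}$ with $0 \le g \le r-1$ and $m+g \equiv 0 \pmod{r}$, the element $a^{m+g} \in \mathcal{K}_a$ satisfies $(a^{m+g})^2 = a^{m+g}$ and is therefore an idempotent of $\langle a \rangle$.

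For uniqueness, I would argue that any idempotent of $\langle a \rangle$ must lie in $\mathcal{K}_a$. Suppose $a^i$ is idempotent with $1 \le i \le m+r-1$, so $a^{2i} = a^i$. By the minimality of the index $m$, the powers $a, a^2, \ldots, a^{m-1}$ are pairwise distinct and none of them equals any element of $\mathcal{K}_a$. If $i < m$, then $a^i$ belongs to the tail; but $2i \geq m$ would place $a^{2i}$ in $\mathcal{K}_a$, contradicting $a^{2i} = a^i$, while $2i < m$ would force $i = 2i$ and hence $i = 0$, which is impossible. Therefore $i \geq m$, so $a^i \in \mathcal{K}_a$. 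Since $\mathcal{K}_a$ is a (cyclic) group by the preliminaries, and any group contains a unique idempotent, namely its identity, we conclude that $a^{m+g}$ is the only idempotent of $\langle a \rangle$.

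The only mild obstacle is making the case analysis on $i$ versus $m$ clean; everything else is a direct read-off from the structure theorem for finite monogenic semigroups recalled before the lemma, so I do not anticipate any real difficulty.
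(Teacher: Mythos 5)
Your proposal is correct and follows essentially the same route as the paper: decompose $\langle a \rangle = M(m,r)$ into its tail and kernel $\mathcal{K}_a$, note that the group $\mathcal{K}_a$ supplies exactly one idempotent, and rule out idempotents $a^i$ with $i < m$ via $a^{2i} = a^i$ (the paper invokes the minimality of the index $m$ directly where you do a short case analysis on $2i$ versus $m$, but the content is identical). No gaps.
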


\begin{proof}
Let $m$ and $r$ be the index and period of $a$, respectively. Then, $\langle a \rangle = \{a, a^2, \ldots, a^{m + r-1}\}$. The subgroup $\mathcal{K}_a = \{a^m, a^{m+1}, \ldots, a^{m+r-1} \}$ of $\langle a \rangle$ contains exactly one idempotent. If for $1 \le i < m$, $a^i$ is an idempotent, then we have $a^{2i} = a^i$. Consequently, $m \le i$; a contradiction. Hence, the idempotent element of $\mathcal{K}_a$ is the only idempotent in $\langle a \rangle$.
\end{proof}

\begin{theorem}\label{connected-component}
	Let $S$ be a semigroup of bounded exponent. Then $S_f$ is a connected component of $\mathcal{P}_e(S)$ with unique idempotent $f$. Moreover, the connected components of $\mathcal{P}_e(S)$ are precisely $\{S_f \; : \; f \in E(S)\}$ and the number of connected components of $\mathcal{P}_e(S)$ is equal to $|E(S)|$.
\end{theorem}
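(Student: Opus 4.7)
The plan is to leverage Proposition \ref{ch2-component-infinite} by identifying each connected component $C(x)$ with some $S_f$. Since Remark \ref{ch1-re-classification-compoenent} already gives $S = \bigcup_{f \in E(S)} S_f$ as a disjoint union, the task reduces to proving $C(x) = S_f$ whenever $x \in S_f$.

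First I would fix $f \in E(S)$ and $x \in S_f$, with $x^k = f$ for some $k \in \mathbb{N}$. For the inclusion $C(x) \subseteq S_f$: if $y \in C(x)$, then $y \in S(x,m,n)$ for some $m,n \in \mathbb{N}$, so $x^m = y^n$; raising both sides to the $k$-th power yields
\[ y^{nk} = x^{mk} = (x^k)^m = f^m = f, \]
so $y \in S_f$. For the reverse inclusion $S_f \subseteq C(x)$: if $y \in S_f$ then $y^p = f = x^k$ for some $p \in \mathbb{N}$, giving $y \in S(x,k,p) \subseteq C(x)$. Combined with Proposition \ref{ch2-component-infinite}, this shows that each $S_f$ is a connected component of $\mathcal{P}_e(S)$.

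Next, since $f \in S_f$ each such set is nonempty, and the disjointness from the Remark forces these components to be pairwise distinct. Hence the set of connected components of $\mathcal{P}_e(S)$ is precisely $\{S_f \, : \, f \in E(S)\}$, with total count $|E(S)|$. For the uniqueness of the idempotent inside $S_f$: if $e \in S_f$ is an idempotent, then $e^m = f$ for some $m \in \mathbb{N}$, but also $e^m = e$ since $e^2 = e$; therefore $e = f$.

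No serious obstacle arises here: the proof is essentially careful bookkeeping of exponents, invoking the bounded-exponent hypothesis only to guarantee the existence of some $k$ with $x^k = f$ (so that $S_f$ is well-defined and covers $S$). The one subtle step is the chain $y^{nk} = (x^k)^m = f^m = f$, which implicitly uses the idempotence of $f$ to collapse $f^m$ back to $f$, thereby transferring the "belongs to $S_f$" property from $x$ to every neighbour (and hence every element) in the component $C(x)$.
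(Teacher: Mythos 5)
Your proof is correct, but it takes a genuinely different route from the paper's. The paper argues directly at the level of adjacency: it shows $S_f$ is connected by exhibiting the path $a \sim f \sim b$ (since $a^n = f$ puts both $a$ and $f$ in $\langle a \rangle$), and then shows no vertex outside $S_f$ can be adjacent to a vertex inside it by invoking Lemma \ref{gena-one idempotent} --- if $x \sim b$ with $b \in S_f$, then $x, b \in \langle c \rangle$, the unique idempotent of $\langle c \rangle$ is forced to be $f$, and hence the idempotent power of $x$ is also $f$. You instead reduce everything to Proposition \ref{ch2-component-infinite} by proving the set-theoretic identity $C(x) = S_f$ for $x \in S_f$, via the exponent computation $y^{nk} = (x^k)^m = f^m = f$ in one direction and $y^p = f = x^k$ in the other. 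Your route avoids Lemma \ref{gena-one idempotent} altogether and is arguably more economical, since the component structure of $\mathcal{P}_e(S)$ has already been established in full generality; the paper's route is self-contained at the level of edges and makes explicit the role of the unique idempotent inside each finite monogenic subsemigroup. A small bonus of your write-up is that you actually verify the ``unique idempotent'' clause of the statement (via $e = e^m = f$), which the paper's proof leaves implicit.
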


\begin{proof}
	Suppose $a, b \in S_f$. If any one of them is $f$, then $a \sim b$ in $\mathcal{P}_e(S)$. If $a, b \in S_f \backslash \{f\}$, then there is a path $a \sim f \sim b$ in $\mathcal{P}_e(S)$. Let if possible $x \in S \backslash S_f$ such that $x \sim b$ for some $b \in S_f$ so that $x, b \in \langle c \rangle$ for some $c \in S$. Since $b^n = f$ for some $n \in \mathbb N$ so that $f \in \langle c \rangle$. Consequently we get $f \in  \langle y \rangle$ for all $y \in \langle c \rangle$ (see Lemma \ref{gena-one idempotent}). It follows that $x \in S_f$; a contradiction. Hence, $S_f$ is a connected component of $\mathcal{P}_e(S)$ and  by Remark \ref{ch1-re-classification-compoenent}, the connected components of $\mathcal{P}_e(S)$ are $\{S_f\; : \; f \in E(S)\}$.
\end{proof}
\begin{corollary}
A semigroup $S$ is a band if and only if $\mathcal{P}_e(S)$ is a null graph. 
\end{corollary}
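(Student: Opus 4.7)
The plan is to prove both directions directly from the definitions of band and null graph; neither direction needs the machinery of bounded exponent or the component description in Theorem \ref{connected-component}.

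For the forward implication, assume $S$ is a band, so every $z \in S$ satisfies $z^2 = z$. A quick induction shows $z^n = z$ for all $n \geq 1$, and therefore $\langle z \rangle = \{z\}$ for every $z \in S$. Consequently, if $x, y \in S$ are distinct, there is no $z \in S$ with $x, y \in \langle z \rangle$, since $\langle z \rangle$ is a singleton. By the definition of the enhanced power graph, $x$ and $y$ cannot be adjacent, so $\mathcal{P}_e(S)$ has no edges, i.e.\ it is null.

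For the converse, suppose $\mathcal{P}_e(S)$ is null and let $x \in S$ be arbitrary. Both $x$ and $x^2$ lie in $\langle x \rangle$, so if $x \neq x^2$ they would be two distinct vertices with a common monogenic supersemigroup, contradicting the assumption that $\mathcal{P}_e(S)$ contains no edges. Hence $x = x^2$ for every $x \in S$, so $S$ is a band.

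I do not anticipate any real obstacle: the key observation driving both directions is the triviality $\langle z \rangle = \{z\}$ in a band, together with the fact that $x$ and $x^2$ always share the monogenic supersemigroup $\langle x \rangle$. The only subtlety worth flagging is that the statement holds for arbitrary semigroups, without any finiteness or bounded-exponent assumption, so the argument should avoid invoking Theorem \ref{connected-component} or Lemma \ref{gena-one idempotent}.
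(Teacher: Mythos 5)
Your proof is correct. The paper offers no written proof at all---the statement is presented as an immediate consequence of Theorem \ref{connected-component}, the implicit argument being that a band has exponent $1$, so the components are the sets $S_f$ with $f \in E(S) = S$, and each $S_f$ collapses to the singleton $\{f\}$ because $a^m = a$ for every $a$. Your route is more elementary and, as you rightly flag, more honest about the converse: the backward implication does not really follow from the component theorem but from the bare observation that $x$ and $x^2$ always lie in $\langle x \rangle$ and hence are adjacent whenever they are distinct. Both directions of your argument check out ($\langle z\rangle = \{z\}$ in a band kills all potential edges; a null graph forces $x = x^2$ for all $x$), and your version has the mild advantage of making no appeal to bounded exponent or to Lemma \ref{gena-one idempotent}, so it is visibly valid for arbitrary semigroups.
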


Unless stated otherwise, hereafter $S_f$ always denotes the connected component of $\mathcal{P}_e(S)$ containing the idempotent $f$. The following lemma is useful in the sequel.

\begin{lemma}\label{ch1-exponent}
	Let $S$ be a semigroup with exponent $n$. Then for $x \in S$, we have
	\begin{enumerate}
		\item[\rm (i)] $o(x) \leq 2n$ for all $x \in S$.
		
		\item[\rm (ii)] the subsemigroup $\langle x \rangle$ is contained in some maximal monogenic subsemigroup of $S$.
	\end{enumerate}
\end{lemma}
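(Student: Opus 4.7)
The plan is to prove the two parts in sequence, using the defining property of a semigroup of exponent $n$: for every $x \in S$, the power $x^n$ lies in $E(S)$.

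For part (i), I would first note that since $x^n$ is idempotent, $x^{2n} = (x^n)^2 = x^n$. This relation already forces $\langle x\rangle$ to be finite, so the index $m_x$ and period $r_x$ of $x$ are well-defined. From the minimality of the index together with the equation $x^n = x^{n+n}$, I would deduce $m_x \leq n$. Since $n \geq m_x$, the power $x^n$ lies in the group part $\mathcal{K}_x = \{x^{m_x},\ldots,x^{m_x+r_x-1}\}$, where $x^a = x^b$ with $a,b \geq m_x$ is equivalent to $a \equiv b \pmod{r_x}$; the relation $x^n = x^{n+n}$ therefore gives $r_x \mid n$, whence $r_x \leq n$. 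Combining both bounds with $o(x) = m_x + r_x - 1$ yields $o(x) \leq 2n - 1 \leq 2n$.

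For part (ii), the key observation is that, by (i), every monogenic subsemigroup of $S$ is finite with at most $2n$ elements. Consider the family $\mathcal{F}$ of all monogenic subsemigroups of $S$ containing $\langle x \rangle$, partially ordered by inclusion. Any strictly ascending chain $\langle x\rangle \subseteq \langle y_1\rangle \subsetneq \langle y_2\rangle \subsetneq \cdots$ in $\mathcal{F}$ has strictly increasing cardinalities, all bounded above by $2n$, so the chain must terminate after at most $2n$ steps. Thus $\mathcal{F}$ contains a maximal element $\langle y\rangle$. Finally, I would observe that any monogenic subsemigroup of $S$ properly containing $\langle y\rangle$ would automatically contain $\langle x\rangle$ and hence would belong to $\mathcal{F}$, contradicting the maximality of $\langle y\rangle$ in $\mathcal{F}$. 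So $\langle y\rangle$ is a maximal monogenic subsemigroup of $S$ containing $\langle x\rangle$.

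The main subtlety I expect is the step $r_x \mid n$ in part (i): one must first have established $m_x \leq n$ so that the equality $x^n = x^{2n}$ can be read off inside $\mathcal{K}_x$ (where the powers form a cyclic group of order $r_x$); otherwise the ``obvious'' divisibility is not justified. Everything else is either bookkeeping with the index/period decomposition of $\langle x\rangle$ or the cardinality-bounded-chain argument that replaces Zorn's lemma in (ii).
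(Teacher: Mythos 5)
Your proof is correct and follows essentially the same route as the paper: both parts rest on the decomposition $o(x)=m_x+r_x-1$ with the bounds $m_x\le n$ and $r_x\le n$, and on terminating a strictly ascending chain of monogenic subsemigroups via the cardinality bound from part (i). Your derivation of $r_x\le n$ (reading $x^n=x^{2n}$ inside $\mathcal{K}_x$ to get $r_x\mid n$) is marginally more direct than the paper's, which locates the idempotent at $x^{m+g}$ and argues $r\le m+g\le n$, but the substance is identical.
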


\begin{proof}
	(i) Since $x^n = f$ for some $f \in E(S)$, we have $\langle x \rangle = M(m, r)$ for some $m, r \in \mathbb N$. There exists $g$ with $0 \leq g < r$ and $m +g \equiv 0 ({\rm mod} \; r)$ such that $x^{m + g} = f$. Clearly, $n \geq m$ as $x^n$ is the idempotent element of $\langle x \rangle$. Let if possible $n < m +g$. For $m \leq i \ne j \leq m + r- 1$, we have $x^i \ne x^j$ in $\langle x \rangle$. Therefore, $x^n \ne x^{m + g}$, which is not true because $x^n = x^{m + g} = f$. Thus, $n \geq m + g$. Also $r \mid m + g$ and $m \leq n$ gives $r \leq n$. It follows that $o(x) \leq m + r \leq 2n$. 
	
	\noindent (ii) If $\langle x \rangle$ is a maximal monogenic subsemigroup, then the result holds. Otherwise, $\langle x \rangle \subsetneq \langle x_1 \rangle$. If $\langle x_1 \rangle$ is maximal monogenic then this completes our proof. By (i), since $o(x) \leq 2n$, we get a finite chain such that $\langle x \rangle \subsetneq \langle x_1 \rangle \cdots \subsetneq \langle x_{k-1} \rangle \subsetneq \langle x_k \rangle$, where $k \leq 2n$ and $\langle x_k \rangle$ is a maximal monogenic subsemigroup of $S$. This complete the proof.
\end{proof}

\begin{theorem}\label{ch3-complete}
	Let $S$ be a semigroup with exponent $n$. Then $\mathcal{P}_e(S)$  is complete if and only if  $S$ is a monogenic semigroup.
\end{theorem}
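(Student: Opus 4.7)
The plan is to prove both directions separately, with the forward direction being immediate and the backward direction relying on the existence of maximal monogenic subsemigroups guaranteed by Lemma \ref{ch1-exponent}(ii).

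For the forward direction, suppose $S = \langle a \rangle$ is monogenic. Then every element of $S$ is a power of $a$, so for any two distinct vertices $x, y \in S$ we have $x, y \in \langle a \rangle$, witnessing $x \sim y$ in $\mathcal{P}_e(S)$. Thus $\mathcal{P}_e(S)$ is complete.

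For the backward direction, I assume $\mathcal{P}_e(S)$ is complete and aim to exhibit a single element $a$ such that $S = \langle a \rangle$. The key move is to show that $S$ has a unique maximal monogenic subsemigroup. Since $S$ has exponent $n$, Lemma \ref{ch1-exponent}(ii) ensures that every $x \in S$ lies in some maximal monogenic subsemigroup; so if uniqueness is established, we are done. To prove uniqueness, I would take two maximal monogenic subsemigroups $\langle a \rangle$ and $\langle b \rangle$ (with $a \neq b$) and use completeness to find $z \in S$ with $a, b \in \langle z \rangle$. Then $\langle a \rangle \subseteq \langle z \rangle$, and by maximality $\langle a \rangle = \langle z \rangle$. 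This forces $b \in \langle a \rangle$, hence $\langle b \rangle \subseteq \langle a \rangle$, and maximality of $\langle b \rangle$ gives $\langle a \rangle = \langle b \rangle$.

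The only step requiring any care is the comparability argument: I need $a$ and $b$ to be distinct before I can invoke the edge relation, but if $a = b$ there is nothing to prove. I also want to double-check the edge case $|S| = 1$, in which $S$ is trivially monogenic and $\mathcal{P}_e(S) = K_1$ is (by convention) complete. Overall, the result is a clean consequence of Lemma \ref{ch1-exponent}(ii) together with the observation that completeness of $\mathcal{P}_e(S)$ forces any two maximal monogenic subsemigroups to be jointly contained in a larger cyclic one, which by maximality must coincide with both.
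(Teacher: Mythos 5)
Your proof is correct. The forward direction matches the paper's verbatim. For the converse, you take a slightly different route: the paper invokes Lemma~\ref{ch1-exponent}(i) to choose a single element $x$ of maximum order and then argues that any $y \notin \langle x \rangle$ would, via completeness, lie in some $\langle z \rangle \supseteq \langle x \rangle$ which must equal $\langle x \rangle$ by maximality of $o(x)$; you instead invoke Lemma~\ref{ch1-exponent}(ii) to cover $S$ by maximal monogenic subsemigroups and show completeness forces all of them to coincide. The two arguments are parallel --- both use completeness to place two elements inside a common $\langle z \rangle$ and then collapse $\langle z \rangle$ by a maximality argument --- but yours makes the collapse step slightly more transparent: the paper's assertion that $\langle z \rangle = \langle x \rangle$ silently uses that $\langle x \rangle \subseteq \langle z \rangle$ together with $o(z) \leq o(x)$ and finiteness, whereas your containment-maximality argument needs no cardinality comparison. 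The cost is that you lean on part (ii) of the lemma, whose proof itself rests on part (i), so neither version is more elementary than the other. One small point of care in your write-up: when you take two \emph{distinct} maximal monogenic subsemigroups $\langle a \rangle \neq \langle b \rangle$, you should note that this forces the chosen generators $a$ and $b$ to be distinct elements (otherwise the subsemigroups would coincide), which is exactly what licenses the appeal to the edge relation; you gesture at this but it is worth stating in that order.
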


\begin{proof}
	Let $S$ be a   monogenic semigroup. Then there exists $a \in S$ such that $S = \langle a \rangle$. For any  $x, y \in \mathcal{P}_e(S)$, we have $x, y \in   \langle a \rangle$. Thus, by definition, $\mathcal{P}_e(S)$ is complete. Conversely, suppose that  $\mathcal{P}_e(S)$ is complete. By Lemma \ref{ch1-exponent}, $o(x) \leq 2n$ for all $x \in S$.  Now choose an element $x \in S$ such that $o(x)$ is maximum. In order to prove that $S$ is monogenic, we show that  $S = H$, where $H = \langle x \rangle $. If $S \neq H$, then there exists $ y \in S$ but $y \notin H$. Since $\mathcal{P}_e(S)$ is complete,  $x, y \in \langle z \rangle$ for some $z \in S$.  Also note that $\langle z \rangle = \langle x \rangle$. Consequently, $y \in \langle x \rangle$; a contradiction. Thus, $S= \langle x \rangle$. Hence, $S$ is a monogenic semigroup.
\end{proof}

\begin{theorem}\label{bipartite} Let $S$ be a semigroup. Then the following statements are equivalent:
\begin{enumerate}
\item[\rm (i)] The set $\pi (S) \subseteq \{1,2\}$;
\item[\rm (ii)] $\mathcal{P}_e(S)$ is acyclic graph;
\item[\rm (iii)] $\mathcal{P}_e(S)$ is bipartite.
	\end{enumerate}	 
\end{theorem}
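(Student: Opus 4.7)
The plan is to prove the three statements equivalent via the cycle (ii) $\Rightarrow$ (iii) $\Rightarrow$ (i) $\Rightarrow$ (ii). The first implication is immediate, since any acyclic graph is a forest and every forest is bipartite (two-colour by distance parity in each tree-component).

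For (iii) $\Rightarrow$ (i), I would argue by contrapositive. Suppose some $x\in S$ has $o(x)\geq 3$, so that $|\langle x\rangle|\geq 3$ and we may choose three distinct elements $a,b,c\in\langle x\rangle$. By the definition of $\mathcal{P}_e(S)$, any two elements lying together in $\langle x\rangle$ are adjacent, so $\{a,b,c\}$ induces a triangle. Then Theorem \ref{ch1-bipartite} forbids $\mathcal{P}_e(S)$ from being bipartite.

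The crux is (i) $\Rightarrow$ (ii). Assuming $\pi(S)\subseteq\{1,2\}$, every subsemigroup of the form $\langle z\rangle$ has size at most two. If $|\langle z\rangle|=1$ it contributes no edge; if $|\langle z\rangle|=2$ then the index $m$ and period $r$ satisfy $m+r-1=2$, forcing $(m,r)\in\{(1,2),(2,1)\}$. In both cases a direct check shows that $z^2$ is idempotent while $z\neq z^2$. Consequently, every edge of $\mathcal{P}_e(S)$ has the form $\{z,z^2\}$ with $z^2\in E(S)$, so \emph{every edge has exactly one idempotent endpoint}. Now suppose, for contradiction, that $\mathcal{P}_e(S)$ contains a cycle $v_1v_2\cdots v_nv_1$. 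Along the cycle, idempotent and non-idempotent vertices must strictly alternate. Pick a non-idempotent vertex $a$ with two distinct cycle-neighbours $b$ and $c$, necessarily both idempotent. Writing $\{a,b\}=\{z_1,z_1^2\}$ and $\{a,c\}=\{z_2,z_2^2\}$ and using that $a$ is not idempotent, we conclude $a=z_1=z_2$ and $b=a^2=c$, contradicting $b\neq c$.

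The main obstacle is the structural step in (i) $\Rightarrow$ (ii): one must pin down that every generating element $z$ of an edge is non-idempotent while its square is idempotent, thereby forcing the rigid alternation that obstructs \emph{every} cycle rather than only the odd ones. The remaining implications reduce to invoking Theorem \ref{ch1-bipartite} and a standard triangle-exhibition argument.
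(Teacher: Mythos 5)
Your proposal is correct, and the two easy implications ((ii)$\Rightarrow$(iii) via Theorem~\ref{ch1-bipartite}, and (iii)$\Rightarrow$(i) by exhibiting a triangle inside $\langle x\rangle$ when $o(x)\ge 3$) match the paper's. The substantive difference is in (i)$\Rightarrow$(ii). The paper argues globally: it observes that $o(a)\le 2$ for all $a$ makes $S$ of bounded exponent, invokes Theorem~\ref{connected-component} to place any putative cycle inside a single component $S_f$ with a unique idempotent $f$, deduces that the cycle contains two adjacent non-idempotents $a_i\sim a_j$, and then notes that the generator $z$ witnessing this adjacency must have $a_i,a_j,f\in\langle z\rangle$ (by Lemma~\ref{gena-one idempotent}), forcing $o(z)\ge 3$. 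You instead argue locally: since every $\langle z\rangle$ witnessing an edge has exactly two elements, a short index--period computation shows each edge is precisely $\{z,z^2\}$ with $z^2$ idempotent and $z$ not, so each non-idempotent vertex has $a^2$ as its only possible neighbour; two distinct cycle-neighbours of a non-idempotent vertex would both have to equal $a^2$, which is absurd. Your route is more elementary and self-contained --- it bypasses the bounded-exponent/component machinery entirely and in fact establishes the stronger structural statement that under (i) the graph $\mathcal{P}_e(S)$ is a disjoint union of stars centred at idempotents --- while the paper's proof leans on, and illustrates, the component decomposition it has already built. Both arguments are sound.
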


\begin{proof} 
	(i)$ \Rightarrow$(ii) Suppose $\pi(S) \subseteq \{1, 2\}$. Let if possible, there exists a cycle, viz. $a_0 \sim a_1 \sim \cdots \sim a_k \sim a_0$, in $\mathcal{P}_e(S)$. Since  this cycle must belongs to some connected component of $\mathcal{P}_e(S)$. As $o(a) \leq 2$ for all $a \in S$, we get $S$ is of bounded exponent.   By Theorem \ref{connected-component}, there exists $f \in E(S)$ such that $a_i \in S_f$ for all $i$, where $0 \leq i \leq k$. Consequently, at most one $i$ such that $a_i$ is an idempotent element. If none of the vertices of this cycle are idempotents, then $a_0, a_1, f \in \langle z_1 \rangle$ for some $z_1 \in S$ and $f \in E(S)$. Consequently, o$(z_1) \geq 3$; a contradiction. If one of the  vertex of the cycle $a_0 \sim a_1 \sim \cdots \sim a_k \sim a_0$ is idempotent, then note that there exist two non idempotent elements $a_i, a_j$ such that $a_i \sim a_j$. Thus, $a_i, a_j, f \in \langle z \rangle$ for some $z \in S$ which is not possible as $o(z) \leq 2$.
	
	\noindent (ii)$ \Rightarrow$(iii) Since $\mathcal{P}_e(S)$ is acyclic graph so that it does not contain any cycle. By Theorem \ref{ch1-bipartite}, $\mathcal{P}_e(S)$ is bipartite.
	
	\noindent (iii)$ \Rightarrow$(i) Suppose $\mathcal{P}_e(S)$ is a bipartite graph. By Theorem \ref{ch1-bipartite}, $\mathcal{P}_e(S)$ does not contain any odd cycle. To prove $\pi(S) \subseteq \{1, 2\}$. Let if possible there exists  $a \in S$ such that o$(a) \geq  3$, then $a \sim a^2 \sim a^3 \sim a$ is an odd cycle in $\mathcal{P}_e(S)$; a contradiction of the fact that $\mathcal{P}_e(S)$ is bipartite.
\end{proof}


\begin{corollary}
	Let $G$ be a group. Then the following statements are equivalent:
	\begin{enumerate}
		\item[\rm (i)]   exponent of $G$ is at most $2$. Moreover if $G$ is finite, then  $G \cong \mathbb Z_2 \times \mathbb Z_2 \times \cdots \times \mathbb Z_2$.
		\item[\rm (ii)] $\mathcal{P}_e(G)$ is acyclic graph;
		\item[\rm (iii)] $\mathcal{P}_e(G)$ is bipartite;
		\item[\rm (iv)] $\mathcal{P}_e(G)$ is a tree;
		\item[\rm (v)] $\mathcal{P}_e(G)$ is a star graph.
	\end{enumerate}	 
\end{corollary}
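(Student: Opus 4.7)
The strategy is to lean on Theorem \ref{bipartite} for the bulk of the equivalences and to insert the two new items (iv) and (v) by direct inspection. Since every element of a group $G$ has order dividing the exponent of $G$, the condition $\pi(G) \subseteq \{1,2\}$ is precisely the statement that $G$ has exponent at most $2$. Theorem \ref{bipartite} applied to $S = G$ therefore yields (i) $\Leftrightarrow$ (ii) $\Leftrightarrow$ (iii), and it remains to weave (iv) and (v) into a cycle of implications, for which I would prove (i) $\Rightarrow$ (v) $\Rightarrow$ (iv) $\Rightarrow$ (ii).

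For the step (i) $\Rightarrow$ (v), the key observation is that when the exponent of $G$ is at most $2$, every nontrivial element $z$ satisfies $\langle z \rangle = \{e, z\}$. Consequently two distinct vertices $x, y \in G$ lie in a common $\langle z \rangle$ only when one of them is the identity $e$, so the only edges of $\mathcal{P}_e(G)$ are those joining $e$ to each other vertex. Hence $\mathcal{P}_e(G) = K_{1,|G|-1}$, a star graph. The remaining implications (v) $\Rightarrow$ (iv) $\Rightarrow$ (ii) are purely structural: every star graph is a tree, and every tree is acyclic.

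For the ``moreover'' clause of (i), I would recall the standard argument that $x^2 = e$ for every $x \in G$ forces $xy = (xy)^{-1} = y^{-1}x^{-1} = yx$, so $G$ is abelian; a finite abelian group of exponent $2$ is a finite-dimensional vector space over $\mathbb{F}_2$, hence isomorphic to $\mathbb{Z}_2 \times \mathbb{Z}_2 \times \cdots \times \mathbb{Z}_2$. I do not anticipate any serious obstacle. The only delicate point to watch is that $\langle z \rangle$ in this paper denotes the subsemigroup generated by $z$ rather than the cyclic subgroup; however, every element of an exponent-$2$ group has finite order, so the subsemigroup coincides with the cyclic subgroup and in particular contains the identity, which is exactly what produces the star in (v).
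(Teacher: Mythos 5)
Your proposal is correct and follows exactly the route the paper intends: the paper states this as an unproved corollary of Theorem~\ref{bipartite}, and your derivation of (i)$\Leftrightarrow$(ii)$\Leftrightarrow$(iii) from that theorem together with the direct verification that exponent at most $2$ forces $\mathcal{P}_e(G)$ to be the star $K_{1,|G|-1}$ (closing the cycle via (i)$\Rightarrow$(v)$\Rightarrow$(iv)$\Rightarrow$(ii)) is precisely the intended argument. Your remark that the subsemigroup $\langle z\rangle$ coincides with the cyclic subgroup here is the right point to flag, and the elementary-abelian classification for the ``moreover'' clause is standard and correct.
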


In view of Corollary \ref{connected} and Theorem \ref{bipartite}, we  have  the following corollary.

\begin{corollary}
	The enhanced power graph $\mathcal{P}_e(S)$ is a tree if and only if $|E(S)| = 1$ and $\pi (S) \subseteq \{1,2\}$.	
\end{corollary}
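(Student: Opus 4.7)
The plan is to recognize that ``tree'' means connected plus acyclic, and then simply chain together Theorem \ref{bipartite} (for the acyclic half) with Theorem \ref{connected-component} (for the connected half), using the observation that $\pi(S)\subseteq\{1,2\}$ forces $S$ to be of bounded exponent.

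For the forward implication, assume $\mathcal P_e(S)$ is a tree. Then in particular it is acyclic, so by the equivalence (ii)$\Leftrightarrow$(i) of Theorem \ref{bipartite} we immediately get $\pi(S)\subseteq\{1,2\}$. This means every element $x\in S$ satisfies $x^2\in E(S)$, so $S$ has bounded exponent (with exponent at most $2$). We may therefore apply Theorem \ref{connected-component}, which says that the number of connected components of $\mathcal P_e(S)$ equals $|E(S)|$. Since a tree is connected, $|E(S)|=1$.

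For the converse, assume $|E(S)|=1$ and $\pi(S)\subseteq\{1,2\}$. As above, the condition on $\pi(S)$ forces $S$ to have bounded exponent, so Theorem \ref{connected-component} applies and tells us that $\mathcal P_e(S)$ has exactly $|E(S)|=1$ connected component, i.e., it is connected. The equivalence (i)$\Leftrightarrow$(ii) of Theorem \ref{bipartite} also gives that $\mathcal P_e(S)$ is acyclic. A connected acyclic graph is a tree, completing the proof.

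Essentially the only subtle point is to notice that $\pi(S)\subseteq\{1,2\}$ puts $S$ into the bounded-exponent setting, which is what legitimizes invoking Theorem \ref{connected-component}; everything else is a routine repackaging of the two earlier theorems. No separate case analysis or structural argument about $S_f$ is needed.
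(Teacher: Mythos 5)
Your proof is correct. The paper itself gives no written argument --- it derives the corollary ``in view of Corollary \ref{connected} and Theorem \ref{bipartite}'' --- and your assembly is essentially that intended argument, with one small substitution: for the connectedness half you invoke Theorem \ref{connected-component} (number of components $=|E(S)|$ once bounded exponent is established) rather than Corollary \ref{connected} (connectedness iff $\langle x\rangle\cap\langle y\rangle\neq\varnothing$ for all $x,y$). Your choice is arguably the more direct one, since it ties connectedness to the condition $|E(S)|=1$ in a single step, whereas the route through Corollary \ref{connected} would still need the small observation that, when $\pi(S)\subseteq\{1,2\}$, pairwise intersection of the monogenic subsemigroups is equivalent to there being a unique idempotent. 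You also correctly flag and justify the one genuinely necessary hypothesis check --- that $\pi(S)\subseteq\{1,2\}$ forces $x^2\in E(S)$ for every $x$, so $S$ has bounded exponent and Theorem \ref{connected-component} applies --- which the paper leaves implicit.
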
 


\begin{theorem}\label{ch3-regular-graph} The enhanced power graph $\mathcal{P}_e(S)$ is $k$-regular\index{regular}  if and only if $S$ is the union of mutually disjoint monogenic subsemigroups of $S$ of size $k + 1$.
\end{theorem}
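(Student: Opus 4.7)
The proof rests on the observation that $\langle x \rangle \subseteq N[x]$ for every $x \in S$: any two powers of $x$ are adjacent in $\mathcal{P}_e(S)$ with common witness $z = x$. In particular, if $\mathcal{P}_e(S)$ is $k$-regular then
\[
|\langle x \rangle| \;\leq\; |N[x]| \;=\; k+1,
\]
so every monogenic subsemigroup of $S$ is finite of size at most $k+1$. I will use this systematically in both directions.

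The sufficiency is routine. Assume $S = \bigcup_{i \in I} \langle a_i \rangle$ with the $\langle a_i \rangle$ pairwise disjoint and each of size $k+1$. Fix $x \in S$ and let $i$ be the unique index with $x \in \langle a_i \rangle$. Every $y \in \langle a_i \rangle \setminus \{x\}$ is adjacent to $x$ via the witness $a_i$. Conversely, if $x \sim y$ then $x, y \in \langle z \rangle$ for some $z \in S$; picking $j$ with $z \in \langle a_j \rangle$ gives $\langle z \rangle \subseteq \langle a_j \rangle$, so $x \in \langle a_i \rangle \cap \langle a_j \rangle$ and disjointness forces $i = j$ and $y \in \langle a_i \rangle$. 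Thus $N[x] = \langle a_i \rangle$ and $x$ has degree exactly $k$.

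For the necessity, assume $\mathcal{P}_e(S)$ is $k$-regular and let $C$ be a connected component. By the opening observation we may choose $a \in C$ maximising $m := |\langle a \rangle|$, since $m \leq k+1$. If $m < k+1$ then $\langle a \rangle \subsetneq N[a]$, so there exist $y \in N[a] \setminus \langle a \rangle$ and $z \in S$ with $a, y \in \langle z \rangle$; then $z \in C$ and $\langle a \rangle \subsetneq \langle z \rangle$, contradicting the maximality of $m$ in $C$. Hence $m = k+1$ and $N[a] = \langle a \rangle$ by counting. The same counting argument applied to each $b \in \langle a \rangle$ gives $N[b] = \langle a \rangle$, so no edge of $C$ leaves $\langle a \rangle$; consequently $C = \langle a \rangle$. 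Because connected components partition $S$ as sets, this gives the desired decomposition of $S$ into pairwise disjoint monogenic subsemigroups of size $k+1$.

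The main hurdle is the necessity direction. The natural first step only shows that each component \emph{contains} a monogenic subsemigroup of size $k+1$, whereas the conclusion demands equality. The extremal choice of $a$ converts $k$-regularity into the rigid identity $N[a] = \langle a \rangle$, which then propagates along paths to force the whole component to coincide with $\langle a \rangle$.
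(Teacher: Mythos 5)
Your proof is correct, and the necessity direction takes a genuinely different route from the paper's. The paper anchors everything on idempotents: it first shows that $k$-regularity forces $S$ to have bounded exponent, decomposes $S$ into the classes $S_f$ for $f\in E(S)$, uses the fact that $f$ is adjacent to every other element of $S_f$ to get $\deg(f)=|S_f|-1$ and hence $|S_f|=k+1$, deduces that each $S_f$ induces a complete subgraph and is a subsemigroup, and finally invokes the earlier characterization (complete if and only if monogenic, Theorem~\ref{ch3-complete}) to conclude each $S_f$ is monogenic. You instead work component by component with an extremal argument: from $\langle x\rangle\subseteq {\rm N}[x]$ you get the bound $|\langle x\rangle|\le k+1$, choose $a$ maximising $|\langle a\rangle|$ in a component, show maximality forces $|\langle a\rangle|=k+1$ and hence ${\rm N}[a]=\langle a\rangle$ by counting, and propagate the identity ${\rm N}[b]=\langle a\rangle$ over $b\in\langle a\rangle$ to conclude the component equals $\langle a\rangle$. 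This buys you a self-contained argument that never needs the bounded-exponent decomposition, the idempotent classes $S_f$, or the completeness theorem, and it exhibits a generator of each component explicitly; the paper's route, by contrast, reuses machinery (Remark~\ref{ch1-re-classification-compoenent}, Theorem~\ref{connected-component}, Theorem~\ref{ch3-complete}) already developed for other results. Your sufficiency argument is likewise more direct than the paper's, which again passes through the $S_f$ decomposition. Both proofs are valid.
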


\begin{proof} Suppose $\mathcal{P}_e(S)$ is $k$-regular. Note that the order of each element is at most $k + 1$. Otherwise there exists $x \in S$ such that deg$(x) \ne k$; a contradiction of the fact that $\mathcal{P}_e(S)$ is $k$-regular. For each $x \in S$, there exist $m_x, g_x \in \mathbb N_0$ such that $m_x + g_x \leq k + 1$ and $a^{m_x + g_x} \leq k + 1$ is an idempotent element. Choose $n = (k + 1)!$ gives $a^n$ is an idempotent element for all $a \in S$.  Then $S$ is of bounded exponent. By Remark \ref{ch1-re-classification-compoenent},  $S = \underset{f \in E(S)}{\bigcup S_f}$. For $f, f' \in E(S)$, we get $|S_f| - 1 = {\rm deg}(f) = {\rm deg}(f') = |S_{f'}| - 1$. It follows that $|S_f| = |S_{f'}| = k + 1$. Since $\mathcal{P}_e(S)$ is regular, for each $a \in S_f$, we have deg$(a) = |S_f|-1$.  Consequently, for each  $f \in E(S)$, the subgraph induced by  $S_f$ is complete. Now we show that $S_f$ is a subsemigroup of $S$.
	Let $x, y \in S_f$. Then there exist $m, n \in \mathbb N$ such that $x^m = y^n = f$. Since $x \sim y$ as $S_f$ is complete so $x, y \in \langle z \rangle$ for some $z \in S$. Consequently, $\langle z \rangle \subseteq S_f$ so that $xy \in S_f$. By Theorem \ref{ch3-complete},  $S_f$ is a monogenic subsemigroup of $S$.
	
	Conversely, suppose $S$ is the union of mutually disjoint monogenic subsemigroup $S_i$ of $S$ of size $k + 1$ where $i \in \Lambda$ and $\Lambda$ is an index set. For our convenient, we assume that $S_i = \langle a_i \rangle$, where $i \in \Lambda$. Note that $a_i, a_j \notin S_f$ for some $f \in E(S)$. For instance if $a_i, a_j \in S_f$, then $f \in \langle a_i \rangle \cap \langle a_j \rangle$; a contradiction. Also, $f \in E(S) \subseteq S$ implies $f \in \langle a_i \rangle$ for some $i$. Consequently, $a_i \in S_f$ and so $\langle a_i \rangle \subseteq S_f$. If $x \in S_f \setminus \langle a_i \rangle$ then $x \in \langle a_j \rangle$ for some $j \ne i$. Therefore, $x \in S_{f'}$ for some $f' \ne f$; a contradiction of Remark  \ref{ch1-re-classification-compoenent}. Thus, for each $f \in E(S)$, we have $S_f = \langle a_i \rangle$ for some $i$. By Theorem \ref{ch3-complete}, for each $f \in E(S)$, the subgraph induced by $S_f$ is complete and by hypothesis the graph $\mathcal{P}_e(S)$ is regular. 
\end{proof}

By the similar lines of the  proof of Theorem \ref{ch3-regular-graph}, the following theorem on the completeness of the connected components of $\mathcal{P}_e(S)$.

\begin{theorem} Let $S$ be a semigroup of bounded exponent. Then the connected components of the enhanced power graph $\mathcal{P}_e(S)$ are complete\index{complete}  if and only if  $S$ is the union of mutually disjoint monogenic subsemigroup of $S$.
\end{theorem}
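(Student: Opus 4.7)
The plan is to mimic the proof of Theorem \ref{ch3-regular-graph}, weakening the $k$-regularity hypothesis to mere completeness of each component. The core idea in both directions is that, under bounded exponent, the components $S_f$ from Theorem \ref{connected-component} are the only candidates for the claimed monogenic pieces, so one just needs to upgrade ``component'' to ``monogenic subsemigroup'' (and back).

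For the forward direction, I would assume every connected component of $\mathcal{P}_e(S)$ is complete. Since $S$ has bounded exponent, Theorem \ref{connected-component} gives the disjoint decomposition $S = \bigcup_{f \in E(S)} S_f$ into components. I will show each $S_f$ is a monogenic subsemigroup. Take $x, y \in S_f$; by completeness of the component there is $z \in S$ with $x, y \in \langle z \rangle$. Since $z$ is adjacent in $\mathcal{P}_e(S)$ to every element of $\langle z \rangle$, the set $\langle z \rangle$ is connected in $\mathcal{P}_e(S)$ and meets $S_f$, so $\langle z \rangle \subseteq S_f$. In particular $z, xy \in S_f$, so $S_f$ is a subsemigroup and $\mathcal{P}_e(S_f)$ is complete (every adjacency in $S_f$ is witnessed by a $z$ that itself lies in $S_f$). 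As $S_f$ inherits bounded exponent from $S$, Theorem \ref{ch3-complete} forces $S_f$ to be monogenic.

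For the converse, suppose $S = \bigcup_{i \in \Lambda} \langle a_i \rangle$ is a disjoint union of monogenic subsemigroups. By Lemma \ref{gena-one idempotent} each $\langle a_i \rangle$ contains a unique idempotent $f_i$, and since every idempotent of $S$ lies in exactly one $\langle a_i \rangle$, the map $i \mapsto f_i$ is a bijection $\Lambda \to E(S)$. I claim $S_{f_i} = \langle a_i \rangle$ for every $i$. The inclusion $\langle a_i \rangle \subseteq S_{f_i}$ holds because $\langle a_i \rangle$ is monogenic, hence a clique in $\mathcal{P}_e(S)$ by Theorem \ref{ch3-complete}, and contains the idempotent $f_i$; so it must lie entirely in the component $S_{f_i}$. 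Conversely, any $x \in S_{f_i}$ belongs to some $\langle a_j \rangle \subseteq S_{f_j}$, and disjointness of components gives $f_j = f_i$, hence $j = i$. Therefore every component $S_{f_i}$ coincides with $\langle a_i \rangle$, which is complete by Theorem \ref{ch3-complete}.

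The only non-routine step is the containment $\langle z \rangle \subseteq S_f$ in the forward direction: it is what upgrades ``complete in $\mathcal{P}_e(S)$'' to ``complete in $\mathcal{P}_e(S_f)$'' and thereby allows us to invoke Theorem \ref{ch3-complete} on $S_f$ itself. Everything else is bookkeeping with the component decomposition of Theorem \ref{connected-component} and the uniqueness of the idempotent in a monogenic semigroup.
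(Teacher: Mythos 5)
Your proof is correct and is essentially the argument the paper intends: the paper gives no separate proof but states that the result follows ``by the similar lines of the proof of Theorem~\ref{ch3-regular-graph}'', and your write-up is exactly that adaptation (using Theorem~\ref{connected-component} to identify the components with the sets $S_f$, showing each $S_f$ is a subsemigroup via the containment $\langle z\rangle\subseteq S_f$, and invoking Theorem~\ref{ch3-complete} in both directions). The only cosmetic omission is the case $x=y$ when verifying closure of $S_f$ under multiplication, which is handled by $x^2\in\langle x\rangle\subseteq S_f$.
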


%
%

\begin{theorem}
	A semigroup $S$ with exponent $n$ is completely regular if and only if each connected component of $\mathcal{P}_e(S)$ forms a group.
\end{theorem}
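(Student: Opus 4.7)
The plan is to combine Theorem \ref{connected-component}, which identifies the connected components of $\mathcal{P}_e(S)$ as the sets $S_f$ indexed by $f \in E(S)$, with Corollary \ref{ch1-H-class}, which asserts that the $\mathcal{H}$-class $H_f$ of an idempotent $f$ is a subgroup of $S$. Under this identification the statement ``each connected component forms a group'' becomes ``for every $f \in E(S)$, the subset $S_f$ is a subgroup of $S$,'' so I will prove the equivalence in this reformulated form.

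For the forward direction, assume $S$ is completely regular and fix $f \in E(S)$. I will show $S_f = H_f$, which suffices by Corollary \ref{ch1-H-class}. Given $a \in S_f$, complete regularity places $a$ in some subgroup of $S$, which must sit inside an $\mathcal{H}$-class $H_e$ for some idempotent $e$. Since $H_e$ is closed under products, $a^n \in H_e$; but $a^n = f$ by the definition of $S_f$, and the only idempotent inside a group is its identity, so $e = f$ and hence $a \in H_f$. Conversely, for $b \in H_f$, the element $b^n$ lies in the group $H_f$ and is idempotent, so $b^n = f$, giving $b \in S_f$. Thus $S_f = H_f$ as claimed, and this is a subgroup.

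For the converse, suppose each $S_f$ is a subgroup of $S$. By Remark \ref{ch1-re-classification-compoenent} every element of $S$ belongs to some $S_f$ and is therefore contained in a subgroup, which is precisely the definition of complete regularity.

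The main subtlety I anticipate is pinning down, in the forward direction, that the $\mathcal{H}$-class containing a given $a \in S_f$ is exactly $H_f$ rather than some other $H_e$. This rests on two small observations: first, that $\langle a \rangle$ is finite by Lemma \ref{ch1-exponent}(i) and hence contains a unique idempotent by Lemma \ref{gena-one idempotent}, which is forced to be $f$; second, that any idempotent of a group coincides with its identity. Once these are isolated the rest of the argument is routine bookkeeping.
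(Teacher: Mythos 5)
Your proof is correct. The forward direction is essentially the paper's argument: both establish $S_f = H_f$ for each $f \in E(S)$, with $H_f \subseteq S_f$ coming from $a^n$ being the idempotent of the group $H_f$, and $S_f \subseteq H_f$ coming from the fact that complete regularity places each $a \in S_f$ in the $\mathcal{H}$-class of some idempotent $e$, which is then forced to equal $f$ (you argue this directly via $a^n = f \in H_e$; the paper argues the contrapositive via the disjointness of the $S_{f'}$ in Remark \ref{ch1-re-classification-compoenent} --- the same content). Where you genuinely diverge is the converse. The paper takes the longer route of proving that every $\mathcal{H}$-class of $S$ equals some $S_f$ and is therefore a group, which requires invoking Remark \ref{ch1-re-Green's-class} to see that all elements of the subgroup $S_f$ are $\mathcal{H}$-related \emph{in $S$} (a standard but not entirely free fact about subgroups of semigroups), and then appeals to Proposition \ref{ch1-completely-regular}. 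You instead observe that $S = \bigcup_{f \in E(S)} S_f$ by Remark \ref{ch1-re-classification-compoenent} (valid here since $S$ has exponent $n$), so if each $S_f$ is a subgroup then every element of $S$ lies in a subgroup, which is the definition of complete regularity --- no Green's-relation bookkeeping needed. Your converse is shorter and avoids the one mildly delicate step in the paper's version; the trade-off is that the paper's argument yields the extra structural information that the connected components coincide exactly with the $\mathcal{H}$-classes, which your route does not record.
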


\begin{proof}
	Suppose $S$ is completely regular semigroup. Then every $\mathcal H$-class of $S$ is a group (see Proposition \ref{ch1-completely-regular}). In view of Theorem \ref{connected-component}, each connected component of $\mathcal{P}_e(S)$ is of the form $S_f$ for some $f \in E(S)$. To prove that each connected component $\mathcal{P}_e(S)$ forms a group, we show that $S_f = H_f$ for each $f \in E(S)$. Let $a \in H_f$. Then $a^n = f$  for some $n \in \mathbb N$ as $S$ is of exponent $n$ so that $a \in S_f$. On the other hand, suppose $a \in S_f$. If $a \in H_{f'}$ for some $f' \ne f \in E(S)$, then $a \in S_{f'}$;  a contradiction. Thus $H_f = S_f$.
	
	Conversely, suppose every connected component of $\mathcal{P}_e(S)$ forms a group. To prove $S$ is completely regular, we show that every $\mathcal{H}$-class forms a group (see Proposition \ref{ch1-completely-regular}). Let $a \in S$. Then $a \in S_f$ for some $f \in E(S)$. We claim that $H_a = S_f$. Suppose $b \in S_f$. By Remark \ref{ch1-re-Green's-class}, $(b, f) \in \mathcal{H}$. Also, we have $(a, f) \in \mathcal{H}$ so that $(a, b) \in \mathcal{H}$. It follows that $S_f \subseteq H_a$. On the other hand let $b \in H_a$. Then $a \in S_f$ implies that $b \in H_f$. Since $H_f$ contains an idempotent so that $H_f$ forms a group (see Corollary \ref{ch1-H-class}). It follows that $b^m =  f$ for some $m \in \mathbb N$. Hence, $H_a  = S_f$ for some $f \in E(S)$.
\end{proof}





\begin{proposition}\label{ch3-isolated}
An element $a$ of an arbitrary semigroup $S$ is an isolated vertex in $\mathcal{P}_e(S)$ if and only if
\begin{enumerate}
\item[\rm (i)] $a$ is an idempotent in $S$.
\item[\rm (ii)] $ H_a = \{a\}$.
\item[\rm (iii)] $m_x = 1$ for each $x \in S_a$.
\end{enumerate}
\end{proposition}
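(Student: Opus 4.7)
The plan is to attack the biconditional by producing a common witness $z$ with $a,b\in\langle z\rangle$ whenever two vertices are adjacent, and then extracting structure from $\langle z\rangle$ via Lemma~\ref{gena-one idempotent}.

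For the forward direction, I would assume $a$ is isolated and check (i), (iii), (ii) in turn. For (i): if $a\ne a^2$, then $a$ and $a^2$ are adjacent via $\langle a\rangle$, contradicting isolation, so $a$ is idempotent. For (iii): any $x\in S_a$ with $x\ne a$ satisfies $x^m=a$ for some $m$, putting both $x$ and $a$ in $\langle x\rangle$ and forcing $a\sim x$; hence $S_a=\{a\}$, and $m_a=1$ is automatic since $a$ is idempotent. For (ii): by Corollary~\ref{ch1-H-class}, $H_a$ is a subgroup of $S$ with identity $a$, and any $b\in H_a$ with a positive power equal to $a$ is adjacent to $a$ via $\langle b\rangle$, forcing $b=a$. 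I expect (ii) to be the main obstacle: when $S$ is not of bounded exponent, $H_a$ could a priori contain elements $b$ of infinite order whose positive powers never reach $a$, so this case is not immediately ruled out by the witness argument; I would handle it by exploiting the group structure of $H_a$ (using $b^{-1}\in H_a$ to produce a suitable common witness) or by invoking a finiteness condition on the relevant $\mathcal{H}$-class implicit in the setup.

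For the converse, I would assume (i), (ii), (iii) and suppose for contradiction that $a\sim b$ with $b\ne a$, fixing a common witness $z\in S$. Writing $a=z^p$ and using $a^2=a$ gives $z^{2p}=z^p$, so $\langle z\rangle$ is a finite monogenic semigroup whose unique idempotent is $a$ (Lemma~\ref{gena-one idempotent}); let $m_z$ and $r_z$ denote its index and period. The kernel $\mathcal{K}_z$ is a subgroup of $S$ with identity $a$, so $\mathcal{K}_z\subseteq H_a=\{a\}$ by (ii), forcing $r_z=1$. Thus $\langle z\rangle=\{z,z^2,\ldots,z^{m_z}\}$ with $z^{m_z}=a$, and $b=z^i$ for some $1\le i<m_z$. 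A short calculation shows $b^k=z^{ik}=a$ once $ik\ge m_z$, so $b\in S_a$; hypothesis (iii) then gives $m_b=1$, making $\langle b\rangle$ a cyclic group with identity $a$. But then $\langle b\rangle$ is a subgroup of $S$ contained in $H_a=\{a\}$, forcing $b=a$, a contradiction.
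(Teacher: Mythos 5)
Your proposal follows essentially the same route as the paper's proof: isolate $a\not\sim a^2$ to get (i), use $x^m=a$ to get $x\sim a$ via $\langle x\rangle$ for (iii), and in the converse extract the unique idempotent of a common witness $\langle z\rangle$ (Lemma~\ref{gena-one idempotent}), push the kernel subgroup into $H_a$ to kill the period, and then use (iii) to make $\langle b\rangle$ a subgroup of $H_a=\{a\}$. Your converse is in fact written more carefully than the paper's (which manipulates $r_x$ and $m_x$ directly to reach the same contradiction), and it is complete.

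The one step you flag as an obstacle --- the forward direction of (ii) for elements of $H_a$ of infinite order --- is a genuine gap, but it is a gap in the proposition itself rather than in your argument, and the paper's proof silently steps over it: it simply asserts that ``every vertex of the subgraph induced by $H_a$ will be adjacent with $a$,'' which uses exactly the finiteness you are missing. Neither of your proposed repairs can work in full generality. The $b^{-1}$ idea fails because any monogenic subsemigroup $\langle z\rangle$ containing an idempotent satisfies $z^p=z^{2p}$ and is therefore finite; so an element $b\in H_a$ of infinite order can never share a witness with $a$, and hence is \emph{not} adjacent to $a$. Concretely, in $S=(\mathbb{Z},+)$ the identity $0$ is an isolated vertex of $\mathcal{P}_e(S)$ (the only monogenic subsemigroup containing $0$ is $\{0\}$), yet $H_0=\mathbb{Z}\ne\{0\}$, so condition (ii) is not necessary for an arbitrary semigroup. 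The statement, and both proofs, are correct once one assumes every element of $H_a$ has finite order (e.g.\ $S$ finite or of bounded exponent, the standing hypothesis in most of the surrounding results); under that assumption each $x\in H_a$ generates a finite cyclic subgroup containing $a$ by Corollary~\ref{ch1-H-class} and Lemma~\ref{gena-one idempotent}, which is the witness you need. So: your write-up is as complete as the paper's, and your instinct that (ii) is the weak point is exactly right --- the honest fix is to add the finiteness hypothesis, not to find a cleverer witness.
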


\begin{proof}
	Let $a$ be an isolated vertex in $\mathcal{P}_e(S)$. Clearly $a \in E(S)$. Otherwise $a \sim a^2$. Consequently, $ H_a$ forms a group with the identity element $a$. Thus, every vertex of the enhanced power graph induced by $H_a$ will be adjacent with $a$. Let if possible, $x \in H_a \setminus \{a\}$ then $x \sim a$; a contradiction. If $b \ne a \in S_a$ then $b^m = a$ for some $ m \in \mathbb N$. It follows that $b \sim a$; a contradiction. Thus, $S_a = \{a\}$ and $m_a  = 1$.
	
	Conversely suppose for $a \in S$ satisfy (i), (ii) and (iii). Let if possible,  $a \sim x$ for some $x \in S$. Then $a, x \in \langle b \rangle$ for some $b \in S$. Consequently, $\langle x \rangle \subseteq \langle b \rangle$ and $a \in \langle x \rangle$. Note that $r_x  = 1$. If $r_x > 1$, then $a = x^p$ for some $p > m_x$. Consequently, $(a, x^q ) \in \mathcal H$ for some $m \leq q \ne p$ and  so  $| H_a| > 1$; a contradiction. Thus $r_x = 1$ and $m_x = 1$ implies  $x = a$; a contradiction.
\end{proof}

%

Now we discuss the planarity of $\mathcal{P}_e(S)$. We begin with the following proposition which ensures the non-planarity of $\mathcal{P}_e(S)$.

\begin{proposition}\label{ch3-planar-propo}
	Let $\mathcal{P}_e(S)$ be a planar graph\index{planar}. Then $o(a) < 5$ for all  $a \in S$.
\end{proposition}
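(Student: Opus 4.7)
The plan is to prove the contrapositive via Kuratowski's theorem (Theorem \ref{ch1-planar-Kuratowski}): if some element $a \in S$ has $o(a) \ge 5$, then $\mathcal{P}_e(S)$ contains $K_5$ as a subgraph, hence is non-planar.

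The key observation is that for any $a \in S$, the induced subgraph of $\mathcal{P}_e(S)$ on the vertex set $\langle a \rangle$ is complete. Indeed, any two elements $x, y \in \langle a \rangle$ both lie inside the monogenic subsemigroup $\langle a \rangle$, so by the definition of the enhanced power graph they are adjacent in $\mathcal{P}_e(S)$. (This is essentially the ``easy direction'' of Theorem \ref{ch3-complete}, and it needs no bounded-exponent hypothesis, since it only requires that $a$ itself witnesses adjacency for every pair in $\langle a \rangle$.)

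Now I would assume, for contradiction, that $\mathcal{P}_e(S)$ is planar and that some element $a \in S$ satisfies $o(a) \ge 5$. By definition $o(a) = |\langle a \rangle|$, so $\langle a \rangle$ contains at least five distinct elements, say $a^{i_1}, a^{i_2}, a^{i_3}, a^{i_4}, a^{i_5}$. By the observation above, these five vertices are pairwise adjacent in $\mathcal{P}_e(S)$, so $\mathcal{P}_e(S)$ contains a copy of $K_5$ as a subgraph, which is trivially a subdivision of $K_5$. This contradicts Theorem \ref{ch1-planar-Kuratowski}, finishing the proof.

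There is no real obstacle here; the statement follows almost immediately once one notices that $\langle a \rangle$ induces a clique in $\mathcal{P}_e(S)$. The only minor point worth being explicit about is that $o(a)$ is defined as $|\langle a \rangle|$ (as recalled in the Preliminaries) so the condition $o(a) \ge 5$ directly gives us five mutually adjacent vertices, regardless of whether $a$ has finite index-period structure or not.
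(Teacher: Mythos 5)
Your proof is correct and follows essentially the same route as the paper: assume some $a$ has $o(a)\ge 5$, observe that $\langle a\rangle$ induces a clique and hence a $K_5$, and conclude non-planarity from Theorem \ref{ch1-planar-Kuratowski}. Your write-up merely spells out why $\langle a\rangle$ is a clique, which the paper leaves implicit.
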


\begin{proof}
	Let if possible, there exists an element $a \in S$ such that $o(a) \geq 5$. Then the subgraph induced by  $\langle a \rangle$ contains $K_5$. By Theorem \ref{ch1-planar-Kuratowski}, $\mathcal{P}_e(S)$  is non-planar; a contradiction.
\end{proof}

\begin{theorem}\label{ch3-planar}
	Let $S$ be a semigroup such that the index of every element of order four is either one or two. Then $\mathcal{P}_e(S)$ is planar  if and only if the following condition holds
	\begin{enumerate}
		\item[\rm (i)] For $a \in S$, we have o$(a) \leq 4$
		\item[\rm (ii)] $S$ does not contain $a, b, c \in S$ such that o$(a) = $o$(b) = $o$(c) = 4$, $m_a = m_b = m_c = 2$  and $|\langle a \rangle \cap \langle b \rangle \cap \langle c \rangle| = 3$.
	\end{enumerate}
\end{theorem}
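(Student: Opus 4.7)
Condition (i) is immediate from Proposition \ref{ch3-planar-propo}. For (ii), suppose for contradiction that distinct $a, b, c \in S$ satisfy $o(a) = o(b) = o(c) = 4$, $m_a = m_b = m_c = 2$ and $|\langle a \rangle \cap \langle b \rangle \cap \langle c \rangle| = 3$. Writing $\langle a \rangle = \{a, a^2, a^3, a^4\}$ with $\mathcal{K}_a = \{a^2, a^3, a^4\}$ a cyclic group of order $3$, a direct computation in $M(2,3)$ shows that $\langle a^k \rangle$ has size at most $3$ for $k \geq 2$, so the only generator of $\langle a \rangle$ is $a$ itself. Consequently $a \notin \langle b \rangle$ (else $\langle a \rangle = \langle b \rangle$ would force $a = b$), and similarly none of $a, b, c$ lies in the common triple intersection. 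The three common elements must therefore coincide with $\mathcal{K}_a = \mathcal{K}_b = \mathcal{K}_c$, and then $\{a, b, c\}$ together with $\mathcal{K}_a$ induce a subgraph containing $K_{3,3}$, contradicting Theorem \ref{ch1-planar-Kuratowski}.

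\textbf{Reverse direction: reduction to components.} Conversely, assume (i) and (ii). The hypothesis together with (i) restricts $(m_a, r_a)$ to one of the eight types $(1,1), (1,2), (2,1), (1,3), (2,2), (3,1), (1,4), (2,3)$, and a short verification in each case shows $a^{12}$ is idempotent for every $a \in S$; thus $S$ has bounded exponent. By Theorem \ref{connected-component} the components of $\mathcal{P}_e(S)$ are exactly $\{S_f : f \in E(S)\}$, and since planarity of a graph is equivalent to planarity of each component, it suffices to prove each $S_f$ is planar.

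\textbf{Reverse direction: planarity of each $S_f$.} Since $f \in \langle a \rangle$ for every $a \in S_f$, the vertex $f$ is universal in $S_f$. By the classical fact that the join $K_1 + H$ is planar iff $H$ is outerplanar, together with the Chartrand--Harary characterization of outerplanar graphs as those with no $K_4$ or $K_{2,3}$ minor, it suffices to show $S_f \setminus \{f\}$ has neither minor. Absence of $K_4$: any four pairwise adjacent non-idempotents would, by case analysis on the allowed $(m, r)$ types, force containment in a single monogenic subsemigroup of size $\geq 5$, contradicting (i). Absence of $K_{2,3}$: the pivotal observation is that an order-$4$ index-$2$ element $a$ has $\langle a \rangle$ as the unique monogenic subsemigroup containing $a$ (anything larger would violate (i), and the only generator of $\langle a \rangle$ is $a$ itself). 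Tracking how the size-$3$ group parts $\mathcal{K}_a$ of type-$(2,3)$ monogenic subsemigroups can overlap, one shows that a $K_{2,3}$ minor arises precisely when three distinct order-$4$, index-$2$ elements share a common $\mathcal{K}$, which is exactly what (ii) forbids.

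\textbf{Main obstacle.} The principal technical difficulty is the exhaustive enumeration of how monogenic subsemigroups of the eight allowed $(m, r)$ types can pairwise and triply intersect inside a single component $S_f$: for each element $a$ one must determine which other $\langle z \rangle$ can contain $a$ and what the resulting neighborhoods look like. Once this bookkeeping is completed, (i) handles all potential $K_4$ minors and (ii) emerges as the unique remaining obstruction to outerplanarity of $S_f \setminus \{f\}$, giving the desired equivalence.
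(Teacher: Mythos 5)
Your necessity direction is correct and in fact slightly more careful than the paper's: the paper simply asserts that a violation of (ii) yields a $K_{3,3}$ with parts $\{a,b,c\}$ and the three common elements, whereas you verify the required distinctness (no generator of a semigroup of type $M(2,3)$ is a proper power, so $a,b,c$ are disjoint from $\langle a\rangle\cap\langle b\rangle\cap\langle c\rangle=\mathcal{K}_a$). Your sufficiency strategy is genuinely different from the paper's: the paper builds an explicit plane drawing of each component $S_f$ by splitting the order-four elements into the index-one and index-two classes and drawing ${\rm N}[B]$, ${\rm N}[A]$ and the order-three part separately, while you exploit the universal vertex $f$ to reduce planarity of $S_f$ to outerplanarity of $S_f\setminus\{f\}$. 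That reduction is valid and attractive, and your justification of bounded exponent is more explicit than the paper's.

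However, the execution of the sufficiency direction has two genuine gaps. First, outerplanarity requires excluding $K_4$ and $K_{2,3}$ as subdivisions (or minors), not merely as subgraphs; your $K_4$ argument only addresses ``four pairwise adjacent'' vertices and your $K_{2,3}$ argument only the subgraph case. A $K_4$ with one edge subdivided contains no $K_4$ subgraph yet is not outerplanar, so ruling out $4$-cliques is not by itself sufficient. Second, essentially all of the mathematical content is deferred: the assertions that a $K_4$ ``would force containment in a single monogenic subsemigroup of size $\geq 5$'' (not automatic for semigroups, since adjacency in $\mathcal{P}_e(S)$ is only pairwise and a clique need not lie in a single monogenic subsemigroup) and that a $K_{2,3}$ ``arises precisely when three order-four, index-two elements share a common $\mathcal{K}$'' are exactly the statements that require the neighbourhood computations the paper carries out --- e.g.\ that ${\rm N}[a]=\langle a\rangle$ or $\langle a\rangle\cup\{x\}$ for $a$ of order four, and the description of the families $P$ and $Q$ of monogenic subsemigroups attached to an element of order two. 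As written, the proposal is a plausible proof plan rather than a proof; to close it one must both carry out that case analysis and upgrade the forbidden-subgraph exclusions to forbidden-subdivision exclusions (or, as the paper does, sidestep both issues by exhibiting the drawing directly).
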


\begin{proof}
	First suppose that $S$ satisfies the conditions (i) and (ii). In order to prove $\mathcal{P}_e(S)$ is planar, it is sufficient to show that every connected component of $\mathcal{P}_e(S)$ is planar.  Since $o(a) \leq 4$ for all $a \in S$ so that $S$ is of bounded exponent. In view of Theorem \ref{connected-component}, each connected component of $\mathcal{P}_e(S)$ is of the  form $S_f$, where $f \in E(S)$. Now we establish a planar drawing for $S_f$. Consider the set $A_4 = \{a \in S_f \; : \; \text{o}(a) = 4  \}$. In view of given hypothesis, note that the sets $A= \{a \in A_4 : m_a = 1 \}$ and $B = \{a \in A_4 : m_a = 2 \}$ forms a partition of $A_4$ i.e. $A_4 = A \cup B$. Let $a \in A_4$. Observe that $a \sim b$ if and only if $b \in \langle a \rangle$. For instance, if $a \sim b$ for some $b \in S \setminus \langle a \rangle$, then $a, b \in \langle c \rangle$  for some $c \in S$. Since $o(a) = 4$ and $o(c) \leq 4$, we have $b \in \langle a \rangle$; a contradiction.  Now  we prove that the subgraph induced by the elements of N$(A_4)$ is planar though the following claims:

	\begin{claim}
		The subgraph induced by  ${\rm N}[B]$ is planar.
	\end{claim} 
	
	\noindent\textit{Proof of claim:} First suppose that $a \in B$. Then  $m_a = 2$ so that $\langle a \rangle = \{a, a^2, a^3, a^4 : a^5 = a^2\}$ and $\langle a^2 \rangle = \langle a^4 \rangle$. It follows that  $a^2 \sim x$ if and only if $a^4 \sim x$ for all  $x \in S$. Further,  if $a^2 \sim x$ for some $x \in S \setminus \langle a \rangle$, then $a^2, x \in \langle y \rangle$ for some $y \in S$. By (i), $o(y) \leq 4$ and $x \in \{a^2, a^3, a^4\}$ gives $x = y$ and  $o(x) = 4$. By hypothesis, we get either $m_x = 2$ or $m_x = 1$. Since $o(a^2) = 3$ and $a^2 \in \langle x \rangle$ implies that $m_x = 2$.  If possible, let  $a^2 \sim y$ for some $y \in S \setminus (\langle a \rangle \cup \{x\})$. Then by using the similar argument, we get $\langle y \rangle  = \{y, a^2, a^3, a^4\}$ and $m_y = 2$. Therefore, we have $|\langle a \rangle \cap \langle x \rangle \cap \langle y \rangle| = 3$ and $m_a = m_x = m_y = 2$; a contradiction of (ii). It follows that either N$[\langle a \rangle] = \langle a \rangle$ or   N$[\langle a \rangle] = \langle a \rangle \cup \{x\}$ for some $x \in B$.  Note that  N$[x] = \langle x \rangle =  \{x, a^2, a^3, a^4\}$ as $x \in A_4$. Thus, we can draw the subgraph induced by all the vertices of N$(B)$ in a plane without cutting an edge shown in Figure \ref{fig-planar2}.
	\begin{figure}[h!]
		\centering
		\includegraphics[width=0.6\textwidth]{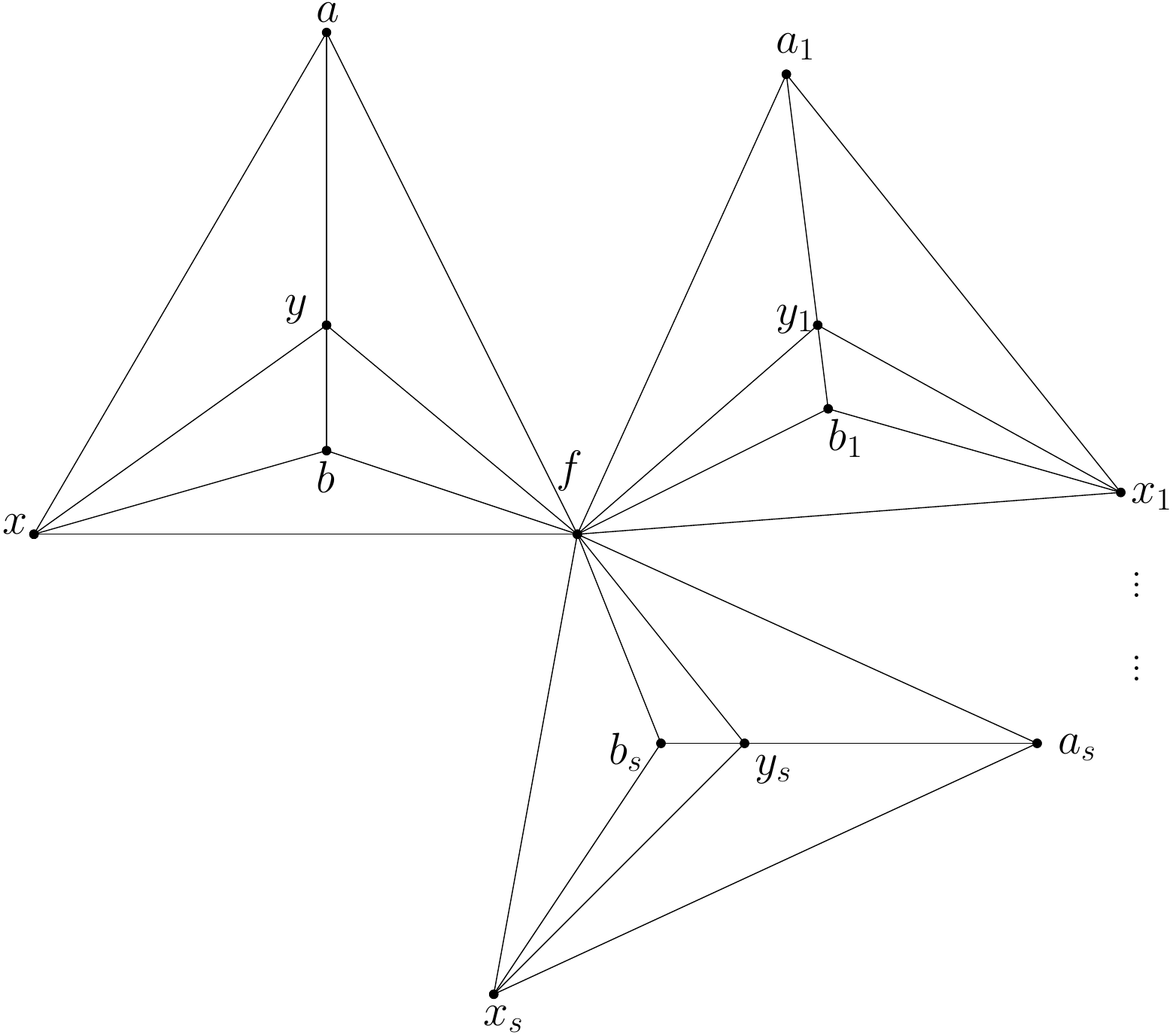}
		\caption{Planar drawing of $\mathcal{P}_e({\rm N}[B])$}\label{fig-planar2}
	\end{figure}

	\begin{claim}
		The subgraph induced by  ${\rm N}[A]$ is planar.
	\end{claim}
	
	\noindent\textit{Proof of claim:} Let $a \in A$. Then   $\langle a \rangle$ is a cyclic subgroup of order $4$ and N$[a] = {\rm N}[a^3] = \langle a \rangle$. If $a^2 \sim z$ for some $z \in S \setminus \langle a \rangle$, then we prove that N$[z] = \langle z \rangle$. Since  $a^2, z \in \langle t \rangle$ for some $t \in S$, we get $ 3 \leq o(t) \leq 4$ as $a^2, f, z$ are distinct element and $o(t) \leq 4$ (by (ii)).  If $o(t) = 3$,  then we must have $t = z$. Let if possible, $z \sim s$ for some $s \in S \setminus \langle z \rangle$, then $z, s \in \langle q \rangle$ for some $q \in S$. Since $o(z) = 3$ and $o(q) \leq 4$ (by (i)) implies $q = s$ and $o(s) = 4$. As a result, $m_s = 2$. Also $a^2 \in \langle s \rangle$ and $o(a^2) = 2$ which is not possible because $\langle s \rangle$ does not contain an element of order $2$. Therefore,  N$[z] = \langle z \rangle = \{f, z, a^2\}$. We may now assume that $o(t) = 4$. Note that $m_t \in \{1, 2\}$ as $t \in A_4$. For $o(a^2) = 2$ and $a^2, z \in \langle t \rangle$, we get $m_t = 1$ and $\langle t \rangle = \langle z \rangle$. It follows that  N$[z] = \langle z \rangle = \{f,z, a^2, z^3 = t\}$. Therefore, we have ${\rm N }[\langle a \rangle] = \langle a \rangle \cup P \cup Q$, where
	$P = \{ z\in S_f: \; {\rm N}[z] = \{z, a^2, f \} = \langle z \rangle \} $  and   $Q = \{ z\in S_f:\; {\rm N}[z] = \{z, a^2, z^3, f \} = \langle z \rangle  \}$. Thus
	the subgraph induced by the vertices of N$[{\rm N}[A]\setminus \{f\}]$ can be drawn in a plane without cutting an edge shown in Figure \ref{fig-planar1}. 
	
	Moreover, we observed that if $x \in {\rm N}[{\rm N}[A]\setminus \{f\}]$, then N$[x] \subseteq {\rm N}[{\rm N}[A]\setminus \{f\}]$.
	\begin{figure}[h!]
		\centering
		\includegraphics[width=0.8\textwidth]{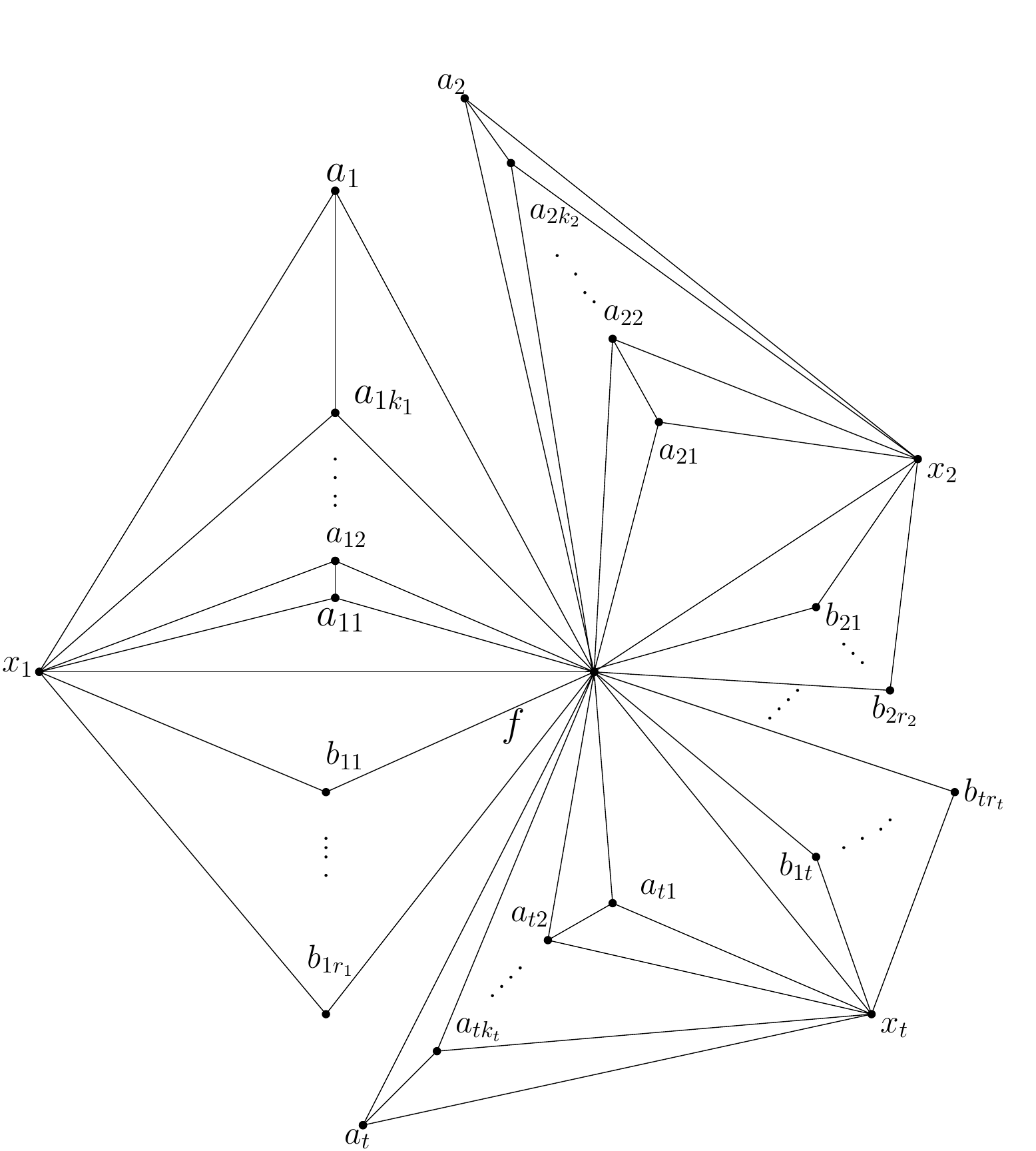}
		\caption{Planar drawing of $\mathcal{P}_e({\rm N}[A])$}\label{fig-planar1}
	\end{figure}

	\begin{claim}
		The subgraph induced by the neighbours of  $A_3 = \{x \in S_f \setminus {\rm N}(A_4)   \; : \; \text{o}(x) = 3\}$ is planar.
	\end{claim}
	
	\noindent\textit{Proof of claim:} Let $a \in A_3$ and $a \sim x$ for some $x \in S_f \setminus \langle a \rangle$. Note that $x \notin {\rm N}[A_4]$. Since $a, x \in \langle t \rangle$ for some $t \in S$ gives $a \in \langle x \rangle$ and $o(x) = 4$; a contradiction. If $m_a = 1$, then ${\rm N}[\langle a \rangle] = \langle a \rangle$. Otherwise, there exists $y_a \in \langle a \rangle$ such that $o(y_a) = 2$. Now we define $A_{y_a} = \{t \in A_3 \; : \; y_a \in \langle t \rangle \}$. For $t \in  A_{y_a}$, clearly, we have $t \in A_3$. Thus the subgraph induced by N$[A_3]$ can be drawn in a plane without cutting an edge. 
	
	Additionally, we can  conclude that the drawing of the subgraph induced by N$[A_3] \cup {\rm N}[{\rm N}[A]\setminus \{f\}]$ is planar. Now the set $A_2$ consists the remaining vertices is left in the subgraph induced by $S_f \setminus {\rm N}[A_3] \cup {\rm N}[{\rm N}[A]\setminus \{f\}]$. For $ x \in A_2$, we must have $o(x) = 2$ and N$(x) = \{f\}$. Thus, the result follows.
	
	Conversely, suppose $\mathcal{P}_e(S)$ is planar. Then by Proposition \ref{ch3-planar-propo}, $o(a) \leq 4$ for all $a \in S$.  On contrary, we assume $S$ does not satisfies the condition (ii). Then there exists $a, b, c \in S$ such that $o(a) = o(b) = o(c) = 4$, $m_a = m_b = m_c = 2$  and $|\langle a \rangle \cap \langle b \rangle \cap \langle c \rangle| = 3$. Therefore, we have $\langle a \rangle \cap \langle b \rangle \cap \langle c \rangle = \{x,y, z\}$. Consequently, $\mathcal{P}_e(S)$ contains a subgraph $K_{3,3}$ whose partitions  sets are $X = \{a, b, c\}$ and $Y = \{x, y, z\}$; a contradiction of the fact that $\mathcal{P}_e(S)$ is planar (see Theorem \ref{ch1-planar-Kuratowski}).
\end{proof}

In view of Theorem \ref{ch1-completely-regular},  we have the following corollaries of Theorem \ref{ch3-planar}.

\begin{corollary}
	Let $S$ be a completely regular semigroup. Then $\mathcal{P}_e(S)$ is planar \index{planar} if and only if $o(a) \leq 4$ for all $a \in S$.
\end{corollary}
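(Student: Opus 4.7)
My plan is to obtain this corollary as a direct specialization of Theorem \ref{ch3-planar}, exploiting the fact that complete regularity forces every element to have index one, which kills both the standing index hypothesis and condition (ii) of that theorem automatically.

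The forward direction is immediate: if $\mathcal{P}_e(S)$ is planar then Proposition \ref{ch3-planar-propo} gives the stronger bound $o(a) < 5$, which in particular yields $o(a) \leq 4$. For the converse, the key preliminary I would record is that in a completely regular semigroup $m_a = 1$ for every $a \in S$. By Proposition \ref{ch1-completely-regular}, the $\mathcal H$-class $H_a$ is a subgroup of $S$, so there is an idempotent $f \in H_a$ serving as the identity of this subgroup, together with an integer $n$ satisfying $a^n = f$. Since $f$ acts as an identity on $a$, we get $a^{n+1} = a^n \cdot a = f \cdot a = a$, hence $a^1 = a^{n+1}$ and therefore $m_a = 1$.

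With this index-one observation in hand, verifying the hypotheses of Theorem \ref{ch3-planar} reduces to bookkeeping. The standing hypothesis that every element of order four has index one or two holds because \emph{every} element has index one; condition (i) of that theorem is precisely the assumption $o(a) \leq 4$ we are given; and condition (ii), which requires three elements $a, b, c$ with $m_a = m_b = m_c = 2$, is vacuously satisfied since no element of $S$ has index two. Thus Theorem \ref{ch3-planar} applies and $\mathcal{P}_e(S)$ is planar. I do not foresee a real obstacle here: the only substantive step is the index-one observation, and the remainder is checking that the hypotheses of the preceding theorem trivialize under the completely regular assumption.
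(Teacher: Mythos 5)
Your proof is correct and takes essentially the same route as the paper, which states this corollary as an immediate consequence of Theorem \ref{ch3-planar} ``in view of'' Proposition \ref{ch1-completely-regular}: complete regularity forces every element (of finite order) to have index one, so the standing index hypothesis and condition (ii) trivialize, while condition (i) is the stated bound and the forward direction is Proposition \ref{ch3-planar-propo}. The only tiny caveat is that your general claim ``$m_a=1$ for every $a$ in a completely regular semigroup'' presupposes that $a$ has finite order (so that the index is defined and some power of $a$ equals the idempotent), but this is automatic in both directions of the corollary since $o(a)\leq 4$ throughout.
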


\begin{corollary}{\rm \cite[Theorem 2.6]{a.Bera2017}}
	Let $G$ be a finite group. Then $\mathcal{P}_e(G)$ is planar if and only if $\pi(G) \subseteq \{1,  2, 3, 4\}$.
\end{corollary}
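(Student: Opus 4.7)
The plan is to derive this corollary directly from Theorem \ref{ch3-planar} by observing that the group setting makes the hypotheses and the exceptional condition trivial. First I would record the key structural observation: in any group $G$, every element $a$ has $\langle a \rangle$ equal to the cyclic subgroup it generates, so the index $m_a$ equals $1$ for every $a \in G$. In particular, the standing hypothesis of Theorem \ref{ch3-planar}, namely that every element of order four has index one or two, is automatically satisfied.

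Next I would verify both directions. For the forward implication, suppose $\mathcal{P}_e(G)$ is planar. Then Proposition \ref{ch3-planar-propo} yields $o(a) < 5$ for every $a \in G$, hence $\pi(G) \subseteq \{1,2,3,4\}$. This direction needs nothing beyond Proposition \ref{ch3-planar-propo}.

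For the converse, suppose $\pi(G) \subseteq \{1,2,3,4\}$. Condition (i) of Theorem \ref{ch3-planar} is immediate: $o(a) \leq 4$ for all $a \in G$. Condition (ii) of that theorem forbids the existence of $a,b,c \in G$ of order four with $m_a = m_b = m_c = 2$ and $|\langle a\rangle \cap \langle b\rangle \cap \langle c\rangle| = 3$; but since $m_x = 1$ for every $x \in G$, the requirement $m_a = 2$ can never be met, so condition (ii) holds vacuously. Both hypotheses of Theorem \ref{ch3-planar} being verified, we conclude that $\mathcal{P}_e(G)$ is planar.

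There is essentially no obstacle here; the only subtlety is to make the two observations that (a) the "index one or two" assumption of Theorem \ref{ch3-planar} is automatic in groups, and (b) the obstruction in clause (ii) cannot occur in groups for the same reason. With those remarks in place the corollary is a one-line specialisation.
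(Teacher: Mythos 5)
Your proposal is correct and matches the paper's intended derivation: the paper records in its preliminaries that every element of a group has index one, so the standing hypothesis of Theorem \ref{ch3-planar} holds automatically and condition (ii) is vacuous, reducing planarity to condition (i), i.e.\ $\pi(G)\subseteq\{1,2,3,4\}$. The forward direction via Proposition \ref{ch3-planar-propo} is likewise exactly what the paper relies on.
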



%

Now we construct  a semigroup $S$ which does not satisfy condition (ii) of Theorem \ref{ch3-planar} such that $\mathcal P_e(S)$ is non-planar. 

\begin{example}
	Consider the semigroup $S  = \{a, x, y, z, b, c  :   a^5 = a^2\}$ with the following Cayley table (see Figure \ref{tabel2}) and the enhanced power graph of a semigroup $S$ given in Figure \ref{fig-planar3}. Clearly, it contains a subgraph $K_{3, 3}$ with the partitioned sets $A = \{a,b,c\}$ and $B= \{x, y, z\}$.  So by Theorem \ref{ch1-planar-Kuratowski}, the  enhanced power graph of $S$ is non-planar. 
	
	\begin{figure}[!htb]
		\minipage{0.45\textwidth}
		\includegraphics[width=\linewidth]{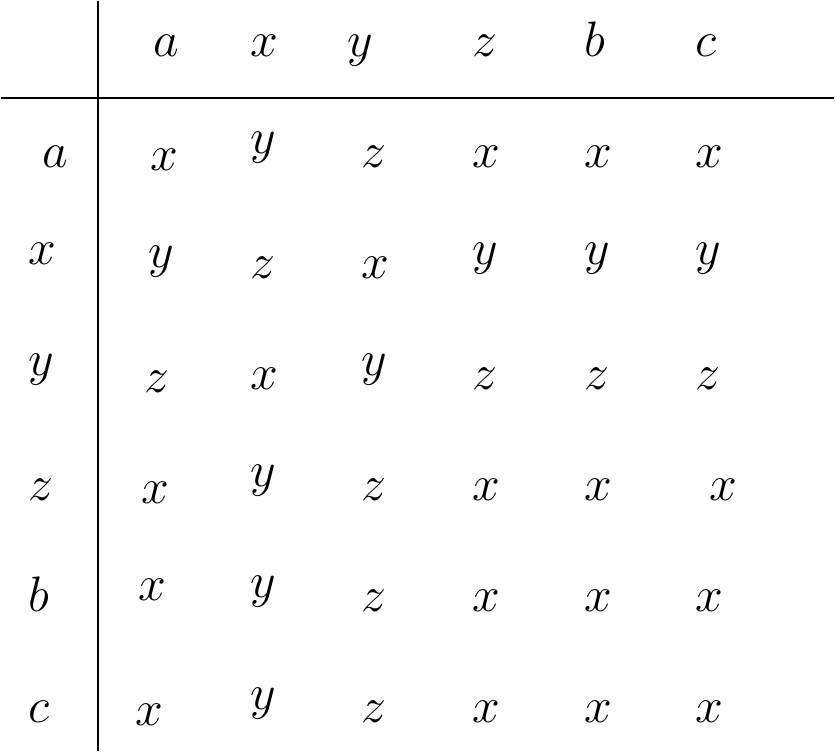}
		\caption{Cayley table}\label{tabel2}
		\endminipage\hfill
		\minipage{0.5\textwidth}
		\includegraphics[width=\linewidth]{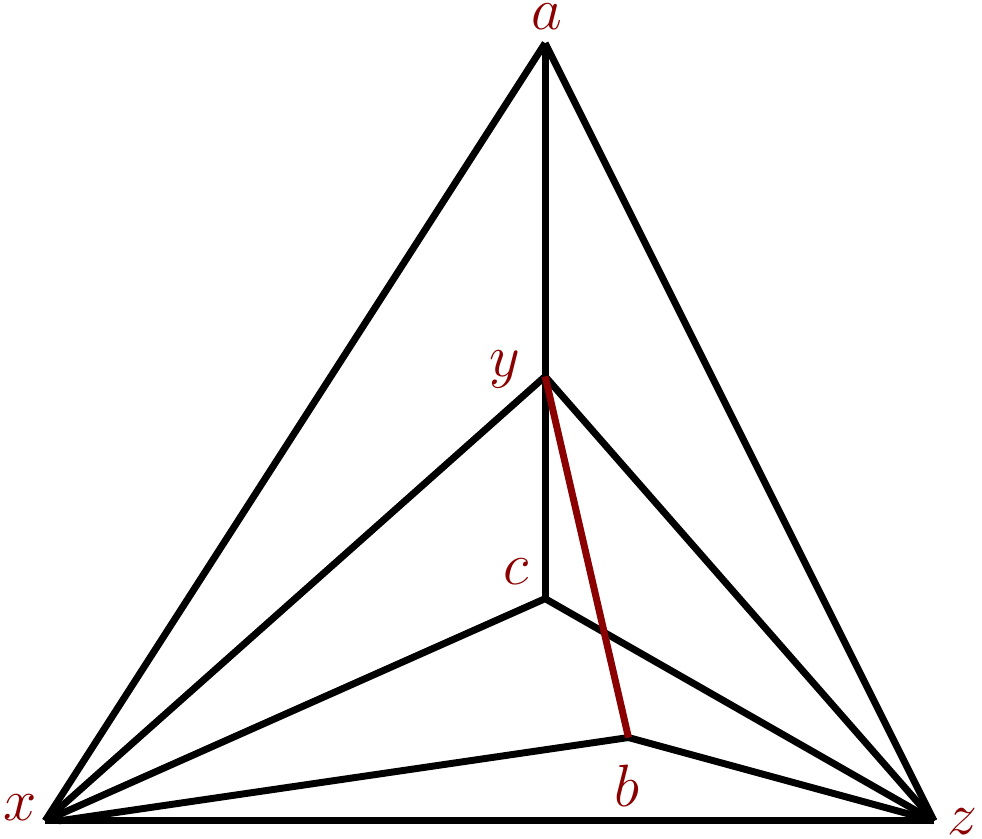}
		\caption{The enhanced power graph of $S$}\label{fig-planar3}
		\endminipage\hfill
		
	\end{figure}
	
\end{example}


Now we obtain the minimum degree and the independence number\index{independence number} of $\mathcal{P}_e(S)$.

\begin{theorem}
Let $S$ be a semigroup with exponent $n$. Then
\begin{enumerate}
\item[\rm (i)] $\delta(\mathcal{P}_e(S)) = m -1$, where $m = {\rm min}\{o(x) :   x \in \mathcal{M} \}$. 
		
\item[\rm (ii)] $\alpha(\mathcal{P}_e(S))$ is the number of maximal monogenic subsemigroup of $S$.
\end{enumerate}	 
\end{theorem}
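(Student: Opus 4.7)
The plan is to establish both parts by combining Lemma~\ref{ch1-exponent}(ii) (every monogenic subsemigroup lies inside some maximal monogenic subsemigroup) with the structural fact that two vertices are adjacent in $\mathcal{P}_e(S)$ precisely when they lie in a common monogenic subsemigroup.

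For part (i), given any $x \in S$, I would apply Lemma~\ref{ch1-exponent}(ii) to embed $\langle x \rangle \subseteq \langle y \rangle$ for some $y \in \mathcal{M}$; then $x$ is adjacent to every other element of $\langle y \rangle$, so $\deg(x) \geq o(y) - 1 \geq m - 1$. To see that this lower bound is attained, choose $x_0 \in \mathcal{M}$ with $o(x_0) = m$. If $z$ is any neighbor of $x_0$, then $x_0, z \in \langle w \rangle$ for some $w \in S$; since $\langle x_0 \rangle$ is maximal among monogenic subsemigroups, the inclusion $\langle x_0 \rangle \subseteq \langle w \rangle$ must be an equality. Hence $z \in \langle x_0 \rangle \setminus \{x_0\}$, giving $\deg(x_0) \leq m - 1$, which together with the lower bound yields $\delta(\mathcal{P}_e(S)) = m - 1$.

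For part (ii), write $\mathfrak{M}$ for the collection of maximal monogenic subsemigroups of $S$. I would construct an independent set $I$ by choosing one generator from each member of $\mathfrak{M}$. If distinct $x, y \in I$ were adjacent, they would lie in a common $\langle w \rangle$, which by Lemma~\ref{ch1-exponent}(ii) sits inside some $\langle u \rangle \in \mathfrak{M}$; maximality of $\langle x \rangle$ and $\langle y \rangle$ then forces $\langle x \rangle = \langle u \rangle = \langle y \rangle$, contradicting our choice of distinct generators. For the matching upper bound, given any independent set $J$, assign to each $x \in J$ some $M_x \in \mathfrak{M}$ containing $x$ (available by Lemma~\ref{ch1-exponent}(ii)). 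This map must be injective, since two elements of $J$ in a common $M_x$ would automatically be adjacent. Hence $|J| \leq |\mathfrak{M}|$, matching the construction.

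The only delicate point I anticipate is bookkeeping around the fact that an element may belong to several maximal monogenic subsemigroups, so the assignment $x \mapsto M_x$ in the upper-bound argument is not canonical and requires an arbitrary choice. Once such a choice is fixed, the injectivity is immediate from the adjacency criterion, and each part of the theorem reduces to a direct application of Lemma~\ref{ch1-exponent}(ii) together with the definition of $\mathcal{P}_e(S)$.
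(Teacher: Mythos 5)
Your proof is correct and follows essentially the same route as the paper: both parts rest on Lemma~\ref{ch1-exponent}(ii) together with the observation that each maximal monogenic subsemigroup induces a clique and that distinct maximal monogenic subsemigroups have non-adjacent generators. In fact your argument for part (i) is slightly more complete than the paper's, since you explicitly justify that every neighbour of $x_0$ lies in $\langle x_0\rangle$ (via maximality forcing $\langle x_0\rangle=\langle w\rangle$), whereas the paper simply asserts $\deg(z)=m-1$.
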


\begin{proof} (i) Let $x \in S$. Then $x \in \langle y \rangle$ for some $y$, where $ y  \in \mathcal M$. Since the subgraph induced by $\langle y \rangle$ forms a  clique, we get deg$(x) \geq m - 1$. Now choose $z \in S$ such that $ z \in \mathcal M$ and $o(z) = m$. Then deg$(z) = m-1$. Thus, we have the result.
	
	\smallskip
	
	\noindent		
	(ii) First note that for $x, y \in \mathcal M$ such that $\langle x \rangle \ne \langle y \rangle$, we have $x \nsim y$. For instance, if $x \sim y$ then $x, y \in  \langle z \rangle$ for some $z \in S$. Consequently, $\langle x \rangle = \langle y \rangle = \langle z \rangle$; a contradiction. Thus, $\alpha(\mathcal{P}_e(S))$ is more than or equal to number of maximal monogenic subsemigroup of $S$. Further, observed that $S$ is the union of maximal monogenic subsemigroup of $S$ and the subgraph induced by maximal monogenic subsemigroups forms a clique in $\mathcal{P}_e(S)$. It follows that $\alpha(\mathcal{P}_e(S))$  is less than or equal to number of maximal monogenic subsemigroup of $S$. Hence, the result holds .
\end{proof}

\section{Chromatic Number of $\mathcal{P}_e(S)$}

 It is well known that the power graph is a spanning subgraph of cyclic graph and cyclic graph is a spanning subgraph of enhanced power graph of a semigroup $S$. In \cite{a.shitov}, Shitov proved that the chromatic number of power graph of an arbitrary semigroup is at most countable. Then Dalal et al. have shown that the chromatic number of cyclic graph of an arbitrary semigroup is at most countable (see \cite{a.Dalal2020chromatic}). But this result need not hold in the case of enhanced power graph associated of semigroup. In this section, we construct a semigroup $S$ such that the chromatic number of $\mathcal{P}_e(S)$ is uncountable (See Example \ref{chr.-uncountable}).

 First, we recall some basic definition and notations. For any relation $R$ on a set $X$, we define $R^{-1}$ by
\[R^{-1} = \{(x, y) \in X \times X : \; (y, x) \in R \} \]

and $1_X$ denote the identity relation on $X$. Now we define another relation
\[R^{\infty} = \bigcup \{R^n : \; n \geq 1 \}, \]
where $R^n$ is the $n$ time composition of $R$. We denote the relation $R^e$ is the smallest equivalence relation containing $R$.

\begin{proposition}[{\rm \cite[Proposition 1.4.9]{b.Howie}}]
	For every relation $R$ on a set $X$, we have $ R^e = [R \cup R^{-1} \cup 1_X]^{\infty}$.
\end{proposition}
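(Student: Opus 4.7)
The plan is to show two set inclusions after verifying that $T := R \cup R^{-1} \cup 1_X$ already has a convenient symmetry.

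First I would observe that $T = T^{-1}$, because $(R)^{-1} = R^{-1}$, $(R^{-1})^{-1} = R$, and $(1_X)^{-1} = 1_X$, so taking the converse permutes the three pieces of the union. Next I would check that $T^{\infty}$ is an equivalence relation on $X$. Reflexivity is immediate from $1_X \subseteq T \subseteq T^{\infty}$. For transitivity, if $(x,y) \in T^m$ and $(y,z) \in T^n$ then $(x,z) \in T^{m+n} \subseteq T^{\infty}$ by definition of composition and the union over $n \ge 1$. For symmetry, if $(x,y) \in T^n$ with witness chain $x = z_0, z_1, \ldots, z_n = y$ having $(z_{i-1}, z_i) \in T$, then $T = T^{-1}$ gives $(z_i, z_{i-1}) \in T$, so the reversed chain shows $(y,x) \in T^n \subseteq T^{\infty}$.

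Since $R \subseteq T \subseteq T^{\infty}$, the relation $T^{\infty}$ is an equivalence relation containing $R$. By the minimality in the definition of $R^e$ as the smallest such equivalence relation, this yields $R^e \subseteq T^{\infty}$.

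For the reverse inclusion, I would use the defining properties of $R^e$ itself: it contains $R$, and being reflexive and symmetric it must contain $1_X$ and $R^{-1}$, hence $T \subseteq R^e$. Transitivity of $R^e$ then gives $T^n \subseteq R^e$ for every $n \geq 1$ by induction on $n$ (the base case is $T \subseteq R^e$; the inductive step composes $T^n \subseteq R^e$ with $T \subseteq R^e$ using transitivity), so $T^{\infty} = \bigcup_{n \ge 1} T^n \subseteq R^e$. Combining the two inclusions gives the desired equality $R^e = T^{\infty}$. There is no real obstacle here; the only point that needs a moment of care is the symmetry of $T^{\infty}$, which hinges on the elementary identity $T = T^{-1}$.
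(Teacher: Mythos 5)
Your proof is correct and is essentially the standard argument (the one given in Howie, which the paper simply cites): show $T^{\infty}$ is an equivalence relation containing $R$ to get one inclusion, and use that any equivalence relation containing $R$ must contain $T$ and hence all its powers to get the other. The key observation $T = T^{-1}$, which makes the symmetry of $T^{\infty}$ work, is handled correctly.
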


\begin{proposition}[{\rm \cite[Proposition 1.4.10]{b.Howie}}]\label{ch1-R^e}
	If $R$ is a relation on a set $X$ and $R^e$ is the smallest equivalence relation on $X$ containing $R$, then $(x, y) \in R^{e}$ if and only if either $x = y$ or, for some $n \in \mathbb N$, there is a sequence of transitions
	\[x = z_1 \rightarrow z_2 \rightarrow \cdots \rightarrow z_n = y\]
	in which, for each $i$ in $\{1, 2, \ldots, n-1\}$, either $(z_i, z_{i + 1}) \in R$ or $(z_{i + 1}, z_{i}) \in R$.
\end{proposition}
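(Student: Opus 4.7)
The plan is to reduce the statement to the preceding proposition, which identifies $R^e$ explicitly as $[R \cup R^{-1} \cup 1_X]^\infty$. Writing $T = R \cup R^{-1} \cup 1_X$, the definition of $R^\infty$ gives
\[
R^e \;=\; T^\infty \;=\; \bigcup_{n \geq 1} T^n,
\]
so $(x,y) \in R^e$ if and only if $(x,y) \in T^n$ for some $n \geq 1$. The core of the proof is then to translate membership in $T^n$ into the existence of a chain of transitions, and to clean the chain so that only the generators $R$ and $R^{-1}$ appear (the $1_X$ steps become the trivial case $x=y$ or can be absorbed by removing repetitions).

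First I would prove the forward implication. Assume $(x,y) \in R^e$ with $x \neq y$. Choose the least $n$ with $(x,y) \in T^n$. Unfolding the definition of the composition $T^n$, there exist $z_1, z_2, \ldots, z_{n+1}$ with $z_1 = x$, $z_{n+1} = y$, and $(z_i, z_{i+1}) \in T$ for every $i$. By definition of $T$, each such pair satisfies one of the three conditions $(z_i, z_{i+1}) \in R$, $(z_{i+1}, z_i) \in R$, or $z_i = z_{i+1}$. I would then argue that the third possibility may be eliminated: if $z_i = z_{i+1}$ for some $i$, then collapsing $z_i$ and $z_{i+1}$ into a single vertex produces a shorter chain from $x$ to $y$ witnessing $(x,y) \in T^{n-1}$, contradicting the minimality of $n$ (unless the collapsed chain has length zero, in which case $x = y$, also a contradiction). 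The surviving sequence has each step in $R \cup R^{-1}$, which is exactly what the statement requires.

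For the converse, suppose either $x = y$ or a chain $x = z_1 \to z_2 \to \cdots \to z_n = y$ exists with each consecutive pair in $R$ or in $R^{-1}$. In the first case, $(x,x) \in 1_X \subseteq T \subseteq R^e$. In the second, every transition $(z_i, z_{i+1})$ lies in $R \cup R^{-1} \subseteq T$, so $(x,y) \in T^{n-1} \subseteq T^\infty = R^e$. Combining both directions yields the claim.

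The main obstacle I anticipate is the bookkeeping for the ``cleanup'' step that discards transitions coming from $1_X$; the minimality argument above handles this cleanly, but one could alternatively argue directly by induction on $n$, distinguishing the cases for the last transition $(z_n, z_{n+1})$. Everything else is a direct unpacking of the definitions of $T^n$ and $T^\infty$, together with the already established identity $R^e = T^\infty$.
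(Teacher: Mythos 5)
Your argument is correct and follows the same route as the source the paper cites for this result (the paper itself gives no proof, quoting \cite[Proposition 1.4.10]{b.Howie}): one unpacks $R^e = [R \cup R^{-1} \cup 1_X]^\infty$ from the preceding proposition into chains of $T$-steps and disposes of the $1_X$-steps, which your minimality argument does cleanly. Nothing further is needed.
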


\begin{definition}
	Let $S$ be a semigroup.  A relation $R$ on  $S$ is \emph{left compatible} if 
	\[ (\forall a , s ,t \in S)  \ \ (s,t) \in R  \ \ \Rightarrow (as , at) \in R,\]
	
	\noindent and \emph{right compatible} if 
	\[ (\forall a , s ,t \in S)  \ \ (s,t) \in R  \ \ \Rightarrow (sa , ta) \in R.\]

	\noindent It is called \emph{compatible}\index{compatible} if 
	\[ (\forall s , s' ,t , t' \in S)  \ \ (s,s') \in R \ \ and  \ \  (t,t')  \in R  \ \ \Rightarrow (ss' , tt') \in R.\]
	
	\noindent A left [right] compatible equivalence relation is called \emph{left [right] congruence}. A compatible equivalence relation is called \emph{congruence}\index{congruence}. 
\end{definition}

\begin{proposition}[{\rm \cite[Proposition 1.5.1]{b.Howie}}]
A relation $\rho$ on a semigroup $S$ is a congruence if and only if it is both left and right congruence.
\end{proposition}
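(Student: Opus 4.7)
The plan is to prove the two directions separately, with the forward direction being a direct application of reflexivity and the backward direction being a single transitivity step that chains a left-compatibility move with a right-compatibility move.

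For the forward direction, I assume $\rho$ is a congruence, so in particular it is an equivalence relation (hence reflexive) and compatible. To verify right compatibility, suppose $(s,t) \in \rho$ and let $a \in S$. Since $\rho$ is reflexive, $(a,a) \in \rho$. Applying compatibility to the pairs $(s,t)$ and $(a,a)$ yields $(sa,ta) \in \rho$. The argument for left compatibility is symmetric, starting from $(a,a) \in \rho$ and $(s,t) \in \rho$ to deduce $(as,at) \in \rho$. So $\rho$ is both a left and a right congruence.

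For the backward direction, assume $\rho$ is both a left and a right congruence. Then $\rho$ is already an equivalence relation, so it remains only to check the compatibility condition. Suppose $(s,s') \in \rho$ and $(t,t') \in \rho$. Applying right compatibility to the pair $(s,s')$ with the element $t$ gives $(st, s't) \in \rho$. Applying left compatibility to the pair $(t,t')$ with the element $s'$ gives $(s't, s't') \in \rho$. By transitivity of $\rho$, we conclude $(st, s't') \in \rho$, establishing compatibility.

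There isn't really a main obstacle here; the proposition is essentially a routine verification once the definitions are unpacked. The only subtlety worth flagging is that both implications crucially use properties of $\rho$ as an equivalence relation: reflexivity in the forward direction (to manufacture the trivial pair $(a,a) \in \rho$) and transitivity in the backward direction (to splice together the two single-sided moves). This is why the statement concerns congruences on a semigroup rather than arbitrary compatible relations, where the equivalence of the two-sided and one-sided notions would fail.
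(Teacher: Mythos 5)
Your proof is correct and is exactly the standard argument (the one in Howie's book, which the paper cites for this proposition without reproducing a proof): reflexivity of $\rho$ yields both one-sided compatibilities from two-sided compatibility, and a transitivity splice of a right move $(st,s't)$ with a left move $(s't,s't')$ gives the converse. The only point worth flagging is that the paper's displayed definition of ``compatible'' contains a typo --- the conclusion should read $(st,s't')\in R$ rather than $(ss',tt')\in R$ --- and you have correctly worked with the intended condition.
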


\begin{theorem}[{\rm \cite[Theorem 1.5.4]{b.Howie}}]\label{ch1-quotient-semi}
	Let $S$ be semigroup and let $\rho$ be a congruence on $S$. Then $S/ \rho = \{a \rho : \; a \in S \}$ is a semigroup with respect to the operation is defined by
	$(a\rho)(b\rho) = (ab)\rho$.
\end{theorem}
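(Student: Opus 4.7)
The plan is to verify two things about the proposed operation on $S/\rho$: first, that it is well-defined (independent of the choice of coset representatives), and second, that it is associative. The underlying set $S/\rho$ is just the partition of $S$ into $\rho$-classes, so no further structural work is required there.

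For well-definedness, I would start with two arbitrary representations of the same pair of cosets, that is, elements $a, a', b, b' \in S$ with $a\rho = a'\rho$ and $b\rho = b'\rho$, equivalently $(a,a') \in \rho$ and $(b,b') \in \rho$. Applying the compatibility condition (with $s = a$, $s' = a'$, $t = b$, $t' = b'$) yields $(ab, a'b') \in \rho$, and hence $(ab)\rho = (a'b')\rho$. So the product $(a\rho)(b\rho) := (ab)\rho$ depends only on the classes, not the representatives. Alternatively one can split this step into two halves using the immediately preceding proposition: from $(a,a') \in \rho$ and right compatibility, $(ab, a'b) \in \rho$; from $(b,b') \in \rho$ and left compatibility, $(a'b, a'b') \in \rho$; transitivity then gives $(ab, a'b') \in \rho$. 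This is the only place the congruence hypothesis is used.

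For associativity, given $a, b, c \in S$, a direct computation from the definition of the operation and the associativity of $S$ gives
\[
\bigl((a\rho)(b\rho)\bigr)(c\rho) = \bigl((ab)\rho\bigr)(c\rho) = \bigl((ab)c\bigr)\rho = \bigl(a(bc)\bigr)\rho = (a\rho)\bigl((bc)\rho\bigr) = (a\rho)\bigl((b\rho)(c\rho)\bigr).
\]
This completes the verification that $S/\rho$ is a semigroup.

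Honestly, there is no real obstacle here; the content of the theorem is precisely that the congruence hypothesis is exactly what is needed to make the naive operation on classes well-defined, with associativity coming for free from $S$. The only place one must be careful is to use both left and right compatibility (or the single two-sided compatibility) and not, for instance, just one of them, since a mere left or right congruence does not in general produce a well-defined product of classes.
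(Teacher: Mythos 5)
Your proof is correct and is the standard argument: the paper itself gives no proof of this statement, citing it directly from Howie's \emph{Fundamentals of Semigroup Theory} (Theorem 1.5.4), and your verification of well-definedness via the compatibility of $\rho$ followed by associativity inherited from $S$ is exactly the textbook proof. Both your one-step use of two-sided compatibility and the alternative split into left and right compatibility plus transitivity are valid and complete.
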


The smallest  congruence relation containing $R$ is denoted by $R^{\#}$. By $S^1$ we shall mean the monoid obtained from $S$ by adjoining an identity element ( if $S$ does not already have such an element). Now we define another relation $R^c$ by
\[R^c = \{ (xay , xby) : x ,y \in S^1 \ \ , (a,b) \in R \}.\]

\begin{lemma}[{\rm \cite[Proposition 1.5.5]{b.Howie}}]
	The relation $R^c$ is the smallest left and right compatible containing $R$.
\end{lemma}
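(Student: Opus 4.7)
The plan is to verify the three standard containment and compatibility claims: (a) $R \subseteq R^c$, (b) $R^c$ is both left and right compatible, and (c) any left and right compatible relation $T$ on $S$ with $R \subseteq T$ must contain $R^c$. The identity adjoined in $S^1$ is what makes all three properties essentially immediate.

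For (a), given $(a,b) \in R$, take $x = y = 1 \in S^1$ in the defining formula for $R^c$ to get $(1 \cdot a \cdot 1, 1 \cdot b \cdot 1) = (a,b) \in R^c$.

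For (b), suppose $(u,v) \in R^c$, so $u = xay$ and $v = xby$ for some $x,y \in S^1$ and $(a,b) \in R$. For any $s \in S$, write $su = (sx)ay$ and $sv = (sx)by$. Since $sx \in S$ (regardless of whether $x = 1$ or $x \in S$), in particular $sx \in S^1$, and hence $(su, sv) \in R^c$. This gives left compatibility. Right compatibility is symmetric: $us = xa(ys)$ and $vs = xb(ys)$ with $ys \in S^1$, so $(us,vs) \in R^c$.

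For (c), let $T$ be a left and right compatible relation on $S$ containing $R$, and let $(xay, xby) \in R^c$ with $(a,b) \in R \subseteq T$. If $x \in S$, left compatibility of $T$ yields $(xa, xb) \in T$; if $x = 1$, this holds trivially since $(xa,xb) = (a,b) \in T$. Now if $y \in S$, right compatibility gives $(xay, xby) \in T$; if $y = 1$, the conclusion is already in hand. Either way $(xay, xby) \in T$, so $R^c \subseteq T$.

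The only subtle point is the bookkeeping with the adjoined identity $1 \in S^1$: strictly, left/right compatibility is postulated for multiplication by elements of $S$, not of $S^1$, so the cases $x = 1$ and $y = 1$ have to be handled by inspection rather than by invoking compatibility. There is no other obstacle; after splitting into these trivial cases the argument is purely formal.
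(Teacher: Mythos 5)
Your proof is correct and is precisely the standard argument for this fact; the paper itself gives no proof, merely citing Howie's Proposition 1.5.5, and your verification (including the careful case split for the adjoined identity $1\in S^1$, which many write-ups gloss over) is the one found there.
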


\begin{proposition}[{\rm \cite[Proposition 1.5.8]{b.Howie}}]\label{ch1-rho-sharp}
	For every relation $R$ on a semigroup $S$, we have $R^{\#} = (R^c)^e$.
\end{proposition}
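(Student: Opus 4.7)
The plan is to establish the equality $R^{\#} = (R^c)^e$ by proving two inclusions: first that $(R^c)^e \subseteq R^{\#}$, using minimality of $R^{\#}$, and second that $R^{\#} \subseteq (R^c)^e$, by verifying that $(R^c)^e$ is itself a congruence containing $R$.

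For the inclusion $(R^c)^e \subseteq R^{\#}$, I would argue as follows. By definition, $R^{\#}$ is a congruence, hence in particular both left and right compatible, and it contains $R$. By the preceding lemma, the smallest left and right compatible relation containing $R$ is $R^c$, so $R^c \subseteq R^{\#}$. Since $R^{\#}$ is additionally an equivalence relation and $(R^c)^e$ is by definition the smallest equivalence relation containing $R^c$, we conclude $(R^c)^e \subseteq R^{\#}$.

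For the reverse inclusion, it suffices to show that $(R^c)^e$ is a congruence containing $R$; minimality of $R^{\#}$ then forces $R^{\#} \subseteq (R^c)^e$. Containment $R \subseteq R^c \subseteq (R^c)^e$ is immediate, and $(R^c)^e$ is an equivalence relation by construction, so the remaining task is to verify left and right compatibility. I would invoke the formula $(R^c)^e = [R^c \cup (R^c)^{-1} \cup 1_S]^{\infty}$ coming from Proposition \ref{ch1-R^e} together with the definition of $R^\infty$ preceding it, and then check that each operation used to build this expression preserves compatibility. Specifically: (i) $1_S$ is trivially left and right compatible; (ii) if $\sigma$ is left (respectively right) compatible, then so is $\sigma^{-1}$, since $(s,t) \in \sigma^{-1}$ means $(t,s) \in \sigma$, hence $(at, as) \in \sigma$ and therefore $(as, at) \in \sigma^{-1}$; (iii) a union of left (respectively right) compatible relations is compatible, by a direct check on each summand; (iv) a relational composition of compatible relations is compatible, hence by induction every power $\sigma^n$ is, and then the countable union $\sigma^\infty = \bigcup_n \sigma^n$ is compatible as well. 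Since $R^c$ is compatible by the preceding lemma, these closure properties show that $[R^c \cup (R^c)^{-1} \cup 1_S]^{\infty}$ is both left and right compatible, completing the verification.

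The main obstacle is the chain of compatibility-preservation checks in the second inclusion, in particular the composition step used to handle the transitive closure. None of the individual checks is deep, but one must chain the compatibility through the intermediate vertices $z_1, \ldots, z_n$ of the transitions in Proposition \ref{ch1-R^e}, applying left (or right) multiplication by $a$ to each transition and verifying that each resulting pair still lies in $R^c \cup (R^c)^{-1} \cup 1_S$. Once this is done, the two inclusions combine to yield $R^{\#} = (R^c)^e$.
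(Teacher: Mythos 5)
The paper does not reprove this statement—it is quoted directly from Howie's \emph{Fundamentals of Semigroup Theory}—and your argument is correct and is essentially the standard proof given there: one inclusion by minimality of $R^c$ and of the generated equivalence, the other by verifying that $(R^c)^e$ is a left and right compatible equivalence relation (hence a congruence, by the quoted characterization) via the transition-sequence description of $(R^c)^e$. No gaps; the compatibility-preservation checks you list (identity, inverse, union, composition, and hence $\sigma^\infty$) are exactly what is needed and all go through.
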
 

 In the following example, we construct a semigroup $S$ such that $\chi(\mathcal{P}_e(S))$ is uncountable.

\begin{example}\label{chr.-uncountable}
	Consider the sets $B =  \{(i, j) \in [1, 2] \times [1, 2] : \; j < i \}$ and $A = B \cup [1, 2]$. Now we define a relation $\rho$ on $A$ by
	\[ (i, j)^2 \rho \; i \;  {\rm and} \; (i, j)^3 \rho \; j \; {\rm for \; all} \; (i, j) \in B. \]
	We prove that $\chi(\mathcal{P}_e(A^*/ \rho^{\#}))$ is uncountable. First we claim that: $i \rho^{\#} \ne j \rho^{\#}$ for $i, j \in [1, 2]$. On contrary, we assume that   $i \rho^{\#} = j \rho^{\#}$ for some $i, j \in [1, 2]$. Then $ i \; \rho^{\#} j$. Since $\rho^{\#} = (\rho^c)^e$ (see Proposition \ref{ch1-rho-sharp}) so that there exist $z_1, z_2, \ldots, z_n \in A^*$ such that $z_1 = i$ and $z_n = j$ with $z_k \; \rho^c \; z_{k + 1}$ for all $k$, where $1 \leq k \leq n -1$. Consequently, we get $z_k = x_k a_k y_k$ and $z_{k + 1} = x_k b_k y_k$ for some $x_k, y_k, a_k, b_k \in A^*$ and $a_k \; \rho \; b_k$. Observe that $a_k, b_k \in \{u, (v, w)^2, (v', w')^3\}$ for some $u, v, w, v', w' \in [1, 2]$.  For $i = z_1 = x_1 a_1 y_1$, we have $x_1 = y_1 = \epsilon$ and $a_1 = i$. Then $z_2 = b_1$ because $z_2 = x_1 b_1 y_1$.  For $z_2 = b_1 \; \rho \; i$, we get either $z_2 = (i, t)^2$ or $z_2 = (t, i)^3$, where $t \in [1, 2]$. Suppose $z_2 = (i, t)^2$. Since $z_2 = x_2 a_2 y_2$ and so $x_2 = y_2 = \epsilon$ and $a_2 = (i, t)^2$. On continuing this process we obtain either $z_n = i$ or $z_n = (i, t)^2$ which is not possible because $j \ne i$ and $j \ne (i, t)^2$.  Similarly we get a contradiction when $z_2 = (t, i)^3$. This completes the proof of claim.
	
	Let $i, j \in [1, 2]$. Without loss of generality, we assume that $j < i$. Then $i \rho^{\#} \ne j \rho^{\#}$. Since $\rho^{\#}$ is a congruence  so that $(i, j)^m \rho^{\#} = ((i, j)\rho^{\#})^m$ for all $m \in \mathbb N$. Therefore, we have $i \rho^{\#} = (i, j)^2 \rho^{\#} = ((i, j)\rho^{\#})^2$ and $j \rho^{\#} = (i, j)^3 \rho^{\#} = ((i, j)\rho^{\#})^3$. This implies that $i \rho^{\#}, j \rho^{\#} \in \langle (i, j)\rho^{\#}\rangle$ gives $i \rho^{\#} \sim j \rho^{\#}$ in $\mathcal{P}_e(A^*/\rho^{\#})$. Thus the uncountable set $C = \{i\rho^{\#} : \; i \in [1, 2] \}$ forms a clique in $\mathcal{P}_e(A^*/\rho^{\#})$. By $\omega(\mathcal{P}_e(A^*/\rho^{\#})) \leq \chi(\mathcal{P}_e(A^*/\rho^{\#}))$, the result holds. \hfill $\square$
	\end{example}

\section{Acknowledgement}
The second author wishes to acknowledge the support of MATRICS Grant  (MTR/2018/000779) funded by SERB, India.


\begin{thebibliography}{10}
\bibitem{a.Aalipour2017}
	G.~Aalipour, S.~Akbari, P.~J. Cameron, R.~Nikandish, and F.~Shaveisi.
	\newblock On the structure of the power graph and the enhanced power graph of a
	group.
	\newblock {\em Electron. J. Combin.}, 24(3):$\# P$3.16, 2017.
	
	\bibitem{a.abawajy2013power}
	J.~Abawajy, A.~Kelarev, and M.~Chowdhury.
	\newblock Power graphs: A survey.
	\newblock {\em Electron. J. Graph Theory Appl.}, 1(2):125--147, 2013.
	
	\bibitem{a.Araujo2015}
	J.~Ara\'ujo, W.~Bentz, and J.~Konieczny.
	\newblock The commuting graph of the symmetric inverse semigroup.
	\newblock {\em Israel J. Math.}, 207(1):103--149, 2015.
	
	\bibitem{a.Araujo2011}
	J.~Ara{\'{u}}jo, M.~Kinyon, and J.~Konieczny.
	\newblock Minimal paths in the commuting graphs of semigroups.
	\newblock {\em European J. Combin.}, 32(2):178--197, 2011.
	
	\bibitem{a.Bera2017}
	S.~Bera and A.~K. Bhuniya.
	\newblock On enhanced power graphs of finite groups.
	\newblock {\em J. Algebra Appl.}, 17(8):1850146, 2017.
	
	\bibitem{a.Bera2021EPG}
	S.~Bera, H.~K. Dey, and S.~K. Mukherjee.
	\newblock On the connectivity of enhanced power graphs of finite groups.
	\newblock {\em Graphs Combin.}, 37(2):591--603, 2021.
	
	\bibitem{b.bosak1964graphs}
	J.~Bos\'ak.
	\newblock {\em The graphs of semigroups}.
	\newblock Publ. House Czechoslovak Acad. Sci., Prague, 1964.
	
	\bibitem{a.budden1985cayley}
	F.~Budden.
	\newblock Cayley graphs for some well-known groups.
	\newblock {\em The Mathematical Gazette}, 69(450):271--278, 1985.
	
	\bibitem{a.MKsen2009}
	I.~Chakrabarty, S.~Ghosh, and M.~K. Sen.
	\newblock Undirected power graphs of semigroups.
	\newblock {\em Semigroup Forum}, 78(3):410--426, 2009.
	
	\bibitem{a.Dalal2020chromatic}
	S.~Dalal and J.~Kumar.
	\newblock Chromatic number of the cyclic graph of infinite semigroup.
	\newblock {\em Graphs Combin.}, 36(1):109--113, 2020.
	
	\bibitem{a.dalal2021enhanced}
	S.~Dalal and J.~Kumar.
	\newblock On enhanced power graphs of certain groups.
	\newblock {\em Discrete Mathematics, Algorithms and Applications},
	13(01):2050099, 2021.
	
	\bibitem{a.Dupont2017quotient}
	L.~A. {Dupont}, D.~G. {Mendoza}, and M.~{Rodr{\'{\i}}guez}.
	\newblock {The enhanced quotient graph of the quotient of a finite group}.
	\newblock {\em arXiv:1707.01127}, 2017.
	
	\bibitem{a.Dupont2017}
	L.~A. {Dupont}, D.~G. {Mendoza}, and M.~{Rodr{\'{\i}}guez}.
	\newblock {The rainbow connection number of enhanced power graph}.
	\newblock {\em arXiv:1708.07598}, 2017.
	
	\bibitem{b.Howie}
	J.~M. Howie.
	\newblock {\em Fundamentals of semigroup theory}.
	\newblock Oxford University Press, Oxford, 1995.
	
	\bibitem{a.kelarev2002directed}
	A.~Kelarev and S.~Quinn.
	\newblock Directed graphs and combinatorial properties of semigroups.
	\newblock {\em J. Algebra}, 251(1):16--26, 2002.
	
	\bibitem{a.kelarev2001powermatrices}
	A.~Kelarev, S.~Quinn, and R.~Smolikova.
	\newblock Power graphs and semigroups of matrices.
	\newblock {\em Bull. Austral. Math. Soc.}, 63(2):341--344, 2001.
	
	\bibitem{a.kelarev2009mining}
	A.~Kelarev, J.~Ryan, and J.~Yearwood.
	\newblock Cayley graphs as classifiers for data mining: the influence of
	asymmetries.
	\newblock {\em Discrete Math.}, 309(17):5360--5369, 2009.
	
	\bibitem{a.Kelerve-minimal-automata}
	A.~V. Kelarev.
	\newblock Labelled {C}ayley graphs and minimal automata.
	\newblock {\em Australas. J. Combin.}, 30:95--101, 2004.
	
	\bibitem{a.konieczny2002semigroups}
	J.~Konieczny.
	\newblock Semigroups of transformations commuting with idempotents, 2002.
	
	\bibitem{2019Mametric}
	X.~Ma and Y.~She.
	\newblock The metric dimension of the enhanced power graph of a finite group.
	\newblock {\em J. Algebra Appl.}, 19(01):2050020, 2020.
	
	\bibitem{a.Panda-enhanced}
	R.~P. Panda, S.~Dalal, and J.~Kumar.
	\newblock On the enhanced power graph of a finite group.
	\newblock {\em Comm. Algebra}, 49(4):1697--1716, 2021.
	
	\bibitem{a.shitov}
	Y.~Shitov.
	\newblock Coloring the power graph of a semigroup.
	\newblock {\em Graphs Combin.}, 33(2):485--487, 2017.
	
	\bibitem{a.trotter1978cartesian}
	W.~T. Trotter, Jr. and P.~Erd\H{os}.
	\newblock When the {C}artesian product of directed cycles is {H}amiltonian.
	\newblock {\em J. Graph Theory}, 2(2):137--142, 1978.
	
	\bibitem{b.West}
	D.~B. West.
	\newblock {\em Introduction to Graph Theory}.
	\newblock Second edition, Prentice Hall, 1996.
	
	\bibitem{a.witte1984survey}
	D.~Witte and J.~A. Gallian.
	\newblock A survey: Hamiltonian cycles in {C}ayley graphs.
	\newblock {\em Discrete Math.}, 51(3):293--304, 1984.
	
	\bibitem{a.2019study}
	S.~Zahirovi\'{c}, I.~Bo\v{s}njak, and R.~Madar\'{a}sz.
	\newblock A study of enhanced power graphs of finite groups.
	\newblock {\em J. Algebra Appl.}, 19(4):2050062, 2020.
	
\end{thebibliography}
\end{document}